\documentclass[12pt]{article}

\usepackage[english]{babel}
\usepackage{graphicx} 

\usepackage{comment}
\usepackage{subcaption}
\usepackage{comment}
\usepackage{amsmath}
\usepackage{amssymb}
\usepackage{cases}
\usepackage[title]{appendix}
\usepackage[colorlinks=true, allcolors=blue]{hyperref}
\DeclareMathOperator{\R}{\mathbb{R}}

\DeclareMathOperator{\Sp}
{\mathbb{S}}

\DeclareMathOperator{\FLs}{(-\Delta)^s}

\DeclareMathOperator{\supp}{supp}

\usepackage{amsthm}
\newtheorem{theorem}{Theorem}[section]
\newtheorem{lemma}[theorem]{Lemma}
\newtheorem{remark}[theorem]{Remark}

\newtheorem{assumption}[theorem]{Assumption}
\theoremstyle{definition}
\newtheorem{definition}{Definition}[section]
\newcommand*\hgeom[2]{{}_{#1}F_{#2}}
\usepackage[letterpaper,top=2cm,bottom=2cm,left=2cm,right=2cm,marginparwidth=1.75cm]{geometry}

\usepackage{amsmath}
\usepackage{graphicx}
\usepackage[colorlinks=true, allcolors=blue]{hyperref}
\usepackage{xurl}
\title{Reconstruction of the Support of an Inhomogeneity for the Fractional Helmholtz Equation}
\author{Dana Zilberberg\footnote{Department of Mathematics, Rutgers University, New Brunswick, NJ, USA (dana.zilberberg@gmail.com)} }

\begin{document}
\maketitle
\fontsize{12}{18}\selectfont
\begin{abstract}
    We consider the inverse scattering problem for inhomogeneous media of compact support governed by the fractional  s-Helmholtz equation, with  $0<s<1$, in dimensions $d=1,2,3$. In particular, we study the determination of the support of the inhomogeneity from the far-field pattern of the scattered field generated by plane waves for all incident directions at a fixed frequency. The far-field pattern is defined as the principal term in the asymptotic expansion of the scattered field at infinity. It is shown in \cite{zilberberg2026limiting}  that, up to a multiplicative constant, this coincides with the far-field pattern corresponding to the classical Helmholtz equation with the same inhomogeneity. Our approach is based on the development of the factorization method, which not only leads to an efficient and easily implementable reconstruction algorithm, but also provides a uniqueness result for determining the support of an admissible set of inhomogeneities. A fundamental ingredient in the analysis is a new transmission eigenvalue problem, whose eigenvalues must be excluded. Therefore, we prove that they are discrete with no finite accumulation points. We present numerical examples in dimension $d=2$  for both the direct and inverse problems.
\end{abstract}
\section{Introduction}
The Helmholtz equation arises naturally as the time-harmonic reduction of the wave equation and governs acoustic scattering phenomena. Recently, considerable attention has been directed toward fractional generalizations of classical models, in particular those involving the fractional Laplacian, which is intrinsically nonlocal and admits several equivalent formulations (see \cite{garofalo2017fractional} for a basic review of this operator). In this work we consider the fractional Helmholtz equation wherein the classical Laplacian is replaced by the fractional Laplacian of order $0<s<1$, in dimension $d = 1,2,3$. More precisely, we are concerned with the scattering problem by a compactly supported inhomogeneous medium, which is studied in detail in \cite{zilberberg2026limiting}. This problem is modeled in terms of the scattered field $u$ such that
\begin{align}\label{inhomogeneous_fractional_Helmholtz}
\FLs u - k^{2s}n u = k^{2s}(n-1)u^{inc} \mbox{ in } \R^d
\end{align}
where the positive real-valued function $n\in L^\infty(\R^d)$ (referred to as the refractive index) is such that $n-1$ has compact support. We denote by $\overline D:=\supp(n-1)$. Here, $u^{inc}$ is the probing wave which solves
\begin{equation}\label{inci}
\FLs u^{inc} - k^{2s} u^{inc}=0.
\end{equation}
To guarantee the well-posedness, $u$ must satisfy the {\em{Sommerfeld Radiation Condition}} (SRC)
\begin{align}\label{SRC}\tag{SRC}
\lim_{|x|\to \infty} |x|^{\frac{d-1}{2}} \left( \frac{\partial u( x)}{\partial |x|} - ik u(x) \right) = 0 \quad \mbox{ uniformly in } \hat x := \frac{x}{|x|}.
\end{align}

The fractional Helmholtz equation is used to describe wave propagation in complex, attenuating media, or media with nonlocal properties that cannot be accurately represented by the classical Helmholtz equation, such as wave propagation in lossy media and in complex geological formations, particularly in the context of nonlocal elasticity. It provides a framework for understanding and simulating wave propagation in fractal or inhomogeneous materials where standard integer-order derivatives are insufficient. Special choices of perturbations of the fractional Helmholtz operator include the relativistic Schrödinger operator and the anomalous transport operator. The main goal of this paper is to study the corresponding inverse problem. Inverse problems for fractional partial differential equations have drawn significant attention in recent years (see, for instance, the monograph \cite{KR23}). Most results for models with fractional derivatives in the spatial variable in the Euclidean setting address interior problems, in which Dirichlet or Neumann data for the fractional Laplacian are prescribed on the exterior of a bounded domain (we refer the reader to \cite{GSU20, FGKRSU25} and the references therein). Owing to the strong unique continuation property associated with nonlocal operators, such inverse problems often yield stronger uniqueness and stability results than their local counterparts. Most recently, uniqueness results for the inverse problem of determining the potential (or its properties) for the fractional Schrödinger equation from scattering data have been obtained in \cite{das2025inverse, LNOS25, UW25}.

For the scattering problem described above, we measure the far-field scattering data, i.e., the far-field pattern $u^\infty(\hat x, \theta)$, $\hat x \in {\mathbb S}^{d-1}$, of the scattered wave $u(x, \theta)$ due to plane wave incident fields $u^{inc}(x):=e^{ikx\cdot \theta}$, for all directions $\theta \in {\mathbb S}^{d-1}$, which solve (\ref{inci}). A similar far-field scattering configuration for the fractional Schrödinger equation in the whole space is considered in \cite{das2025inverse}, where the authors show that a compactly supported potential is uniquely determined from the knowledge of the far-field patterns $u^\infty(\hat x, \theta)$, and for an infinite sequence of high frequencies $k$ tending to $+\infty$. In this paper, we aim to determine the support $D$ of the contrast $n-1$ from the far-field scattering data $u^\infty(\hat x, \theta)$, $\hat x,\theta \in {\mathbb S}^{d-1}$, at a fixed frequency that does not belong to a discrete set of exceptional frequencies without finite accumulation points.

Our approach is based on the factorization method, which belongs to the class of qualitative methods in inverse scattering. This approach was developed for the classical Helmholtz equation and aims to recover the support of the inhomogeneity (nonlinear information) from the study of the range of the (linear) far-field operator $F$ defined in terms of far-field data (we refer the reader to the monograph \cite{CCH23} for the state of the art of this approach). The factorization method is one of the most elegant and rigorous qualitative methods for inverse scattering problems in the classical setting; for inhomogeneous media it was introduced by Kirsch in 1998 in \cite{K98} (see also \cite{kirsch2007factorization}). This non-iterative method provides a mathematically precise characterization of the support of an unknown inhomogeneity from the range of $(F^*F)^{1/4}$, where $F$ is the far-field operator. Its strength is twofold: it leads to an efficient reconstruction algorithm based on solving a linear ill-posed equation, and it provides a theoretical uniqueness result for the support of the inhomogeneity from far-field data $u^\infty(\hat x; \theta)$, $\hat x,\theta \in {\mathbb S}^{d-1}$, at a fixed frequency satisfying the required assumptions. More specifically, the method works if the contrast $n-1$ is uniformly positive or negative in $\overline{D}$.

The goal of this paper is to develop the factorization method for the fractional scattering problem (\ref{inhomogeneous_fractional_Helmholtz})-(\ref{inci}) and (\ref{SRC}). This paper is  the first work to extend qualitative methods to fractional scattering problems. This task  is enabled by \cite{zilberberg2026limiting}, where the authors compute the fundamental solution of the $s$-Helmholtz equation, establish well-posedness of the direct problem via the Lippmann–Schwinger equation, and analyze the asymptotic behavior of the scattered field at infinity. This, in turn, allows  the definition of the far-field pattern and the far-field operator. The analysis of the direct problem excludes a discrete (possibly empty) set of frequencies known as scattering poles \cite[Theorem 4.2]{zilberberg2026limiting}. In addition, the study of the injectivity of the far-field operator introduces a new set of transmission eigenvalues which, under the above assumptions on $n$, form at most a discrete set. Our results require that the fixed frequency $k$ is not one of these real eigenvalues. To validate our analysis, we provide numerical examples for both the direct and inverse problems. Note that the factorization method does not rely on solving the forward problem. However, for the reconstruction algorithm we compute simulated far-field data by solving the forward problem, which is obtained via the Lippmann–Schwinger volume integral equation developed in \cite{zilberberg2026limiting}. The explicit computation of the fundamental solution for all $0<s<1$ and $d=1,2,3$ provides the key tool to carry out these computations.

\noindent
The paper is organized as follows. Section 1, is devoted to the solution of the inverse problem under consideration. More precisely, we develop the factorization method for the fractional Helmholtz equation for $0<s<1$ in dimension $d=1,2,3$ for the  determination of  the support of the inhomogeneity, where, as part of the analysis, a new set of transmission eigenvalues is introduced. We prove that this set is discrete (possibly empty) and can only accumulate at $+\infty$. In Section 2, we present a numerical implementation for solving the forward problem and provide reconstruction examples for the support of the inhomogeneity using the factorization method for the fractional Helmholtz equation, in the case $d=2$. For readers' convenience and to avoid technicalities, we defer to the appendix a detailed discussion on the asymptotic behavior and estimates of the fundamental solution for the $s$-Helmholtz equation.

\section{Inverse  \texorpdfstring{$s$}{alpha}-Helmholtz Inhomogeneous Scattering Problem}

 As already discussed in the introduction, the fractional scattering problem for the inhomogeneous media with refractive index $n\in L^\infty({\mathbb R}^d)$ and compactly supported contrast  $D:=\supp(n-1)$, due to an incident field  $u^{inc}$ solution of the homogeneous $s$-Helmholtz equation (\ref{inci}) reads: Find the total field $u^{tot} = u + u^{inc}$ satisfying 
 $$
(\FLs -k^{2s}n ) u^{tot} = 0 \quad \mbox{ in } \R^d
$$
where the scattered field $u$ is the unique solution in $H^{2s,-\delta}(\R^d)$ for some $\delta \in (\frac{1}{2},1)$ of  
\begin{align}\label{scattering_pb}
    \begin{cases}
        (\FLs - k^{2s}n) u = k^{2s}(n-1) u^{inc} \qquad \mbox{in} \, {\mathbb R}^d\\
        u \mbox{ satisfies } (\ref{SRC}).
    \end{cases}
\end{align}
The equation for the scattered field $u$ can be re-written as
$$ (\FLs - k^{2s}) u = k^{2s}(n-1) u^{tot}  \qquad \mbox{in} \, {\mathbb R}^d.$$
 Let us denote the radiating fundamental solution of the fractional Helmholtz equation by  $\Phi_{s,k}$ and the Helmholtz fundamental solution by $\Phi_{helm,k}$. In \cite{zilberberg2026limiting}, it is proved that $\Phi_{s,k}$ in dimension $d=1,2,3$ for $s\in (0,1)$ is given by $\frac{k^{2-2s}}{s}\Phi_{helm,k} + \Phi^\Delta_{s,k}$, where $\Phi^\Delta_{s,k}$ is a term which decays faster at infinity (see the appendix for details, or \cite{zilberberg2026limiting}). Then
$$(\FLs -k^{2s}) \Phi_{s,k}(|x|)=\delta(0) \qquad \mbox{in} \, {\mathbb R}^d\, \qquad \mbox{together with}\; \ref{SRC}$$ 
and $u \in H^{2s,-\delta}(\R^d)$ is given by
\begin{align}\label{u_integral_rep}
 u(x)=k^{2s} \int_{D}  \Phi_{s,k}(|x-y|) (n(y)-1) u^{tot}(y) dy.
\end{align}
The above formula only requires to know $u^{tot}$ in $D$, which is a solution of the Lippmann-Schwinger equation
\begin{equation}\label{LSh}
(I-T_{s,k})u^{tot}=u^{inc}|_D
\end{equation}
with $T_{s,k}:L^2(D)\to L^2(D)$ defined by 
$$T_{s,k}: \phi \mapsto  k^{2s}\int_{D} \Phi_{s,k}(|x-y|) (n(y)-1) \phi(y) dy \qquad \mbox{for }x\in D.$$
In \cite[Section 4.1]{zilberberg2026limiting} it is shown that $(I-T_{s,k})^{-1}$ exists for all  real $k>0$ outside a  set  $\Lambda \subset \R^+$ (possibly empty) with infinity as its only possible accumulation point, referred to as real scattering poles,  and hence for such $k>0$ the $s$-Helmholtz scattering problem has a unique solution  $u\in H^{2s,-\delta}(\R^d)$. From now on we assume
\begin{assumption}\label{assumption_L}
The frequency $k>0$ is not in the set $\Lambda$, hence it is not a scattering pole.
\end{assumption}

Assume now that the media is probed by incident plane waves $u^{inc}(x):=e^{i k \theta \cdot x}$ with incident direction $\theta \in \mathbb{S}^{d-1}$ (such fields are solutions to the homogeneous $s$-Helmholtz equation, as shown in \cite{cheng2023equivalence}). Let  $u(\cdot, \theta)$ be the corresponding scattered field.  Since $u$ is expressed as the convolution $u(x;\theta):= k^{2s}\left(\Phi_{s,k}*(n-1)u^{tot}(\cdot;\theta) \right)(x)$, using the asymptotic expansion of the fundamental solution (see Appendix, Lemma \ref{lemma_Phi_delta} item (i) and formula (\ref{asymotic_fund_sol_Helm})) we have
$$
u(x; \theta) = c_d k^{\frac{d-3}{2}}\frac{e^{ik|x|}}{(\pi|x|)^{\frac{d-1}{2}}}u^\infty(\hat x; \theta) + O(|x|^{-\frac{d}{2}}) \quad \mbox{ as } |x| \to \infty
$$
where the function $u^\infty(\hat x; \theta)$  defined on ${\mathbb S}^{d-1}$ for fixed $\theta$ is called the far field pattern of the scattered field, and is given by
\begin{equation} \label{far-field}
u^\infty (\hat x; \theta) := \frac{k^{2-2s}}{s}\int_D e^{-ik \hat x \cdot y} k^{2s}(n(y)-1)u^{tot}(y; \theta) dy.
\end{equation}
\begin{remark}
More generally if $u$ is the unique radiating solution in $H^{2s, -\delta}(\R^d)$ of 
\begin{align} \label{s-Helm}
(\FLs-k^{2s}) u = \psi
\end{align}
with $\psi \in L^2(D)$, then the far field pattern of $u$ is given by
$$
u^\infty (\hat x) := \frac{k^{2-2s}}{s}\int_D e^{-ik \hat x \cdot y} \psi(y) dy
$$
and verifies
$$
u(x) = c_d k^{\frac{d-3}{2}}\frac{e^{ik|x|}}{(\pi|x|)^{\frac{d-1}{2}}}u^\infty(\hat x) + O(|x|^{-\frac{d}{2}}) \quad \mbox{ as } |x| \to \infty.
$$
\end{remark}
The {\bf{ inverse problem}} of our concern is to determine $D$ from a knowledge of the far-field data $u^\infty(\hat x; \theta)$ for all  $\hat x,\theta \in {\mathbb S}^{d-1}$. This scattering data defines the {\em{far field operator}} $F:L^2({\mathbb S}^{d-1})\to L^2({\mathbb S}^{d-1})$ by
\begin{equation}
(Fg)(\hat x)=\int_{\mathbb S^{d-1}}g(\theta)u^\infty(\hat x; \theta)d\theta.
\end{equation}
By linearity with respect to the incident wave, $Fg $ corresponds to the far field pattern $u^\infty_g$ of the scattered field $u_g$, where $u_g$ is the solution to the scattering problem (\ref{scattering_pb}) with incident field the superposition of plane waves $u^{inc}_g(x) = \int_{\mathbb S ^{d-1}} g(\theta) e^{ik x \cdot \theta} d\theta$, referred to in the literature as Herglotz wave function. This operator is the main imaging object of  the factorization method which we develop next.
\subsection{Factorization Method for the Characterization of the Support  \texorpdfstring{$D$}{alpha}}
In what follows, the results hold for contrasts $n-1$ that are uniformly one sign in $D$. That is $1-n>c$ or $n-1 >c$ for some $c>0$, and $n>0$. For simplicity of presentation, we will only consider the case $n-1>c$. 
\begin{assumption}\label{assumption_n}
    The refractive index $n \in L^\infty(\R^d)$ is real valued, satisfies that $\supp (n-1) =: D$ is compact, and there exists some $c>0$ such that $n(x)-1 > c$, for all $x\in D$.
\end{assumption}

The Factorization Method is based on the following functional analysis Theorem due to  Kirsch and Grinberg, see for example \cite{kirsch2007factorization}, Theorem 1.23. 
\begin{theorem}[Kirsch, Grinberg]\label{KirschTheorem}
Let $H$ be a Hilbert space, $X$ a reflexive Banach space and let the compact operator $F : H \to H$ have a factorization of the form
$$
F = BAB^*
$$
with operators $B : X \to H$ and $A : X^* \to X$ such that $Im <\phi,A \phi > \not = 0$ for all $\phi \in  \overline{ Ran(B^*)}$ with $\phi \not = 0$. Let furthermore $A$ be of the form $A = A_0 + C$ for some compact operator $C$ and some self‐adjoint operator $A_0$ which is coercive on $Ran(B^*)$ (i.e. $ \exists c>0 |\langle\phi, A_0\phi \rangle \geq c \langle \phi, \phi\rangle$ for all $\phi \in Ran(B^*)$). Finally, assume that $F$ is one‐to‐one and $I + irF$ is unitary for some $r > 0$. Then the ranges of $B$ and $(F^*F)^{1/4}$  coincide. Furthermore, the operators $(F^*F)^{-1/4}B$ and $B^{-1}(F^*F)^{1/4}$  are isomorphisms from $X$ onto $H$ and from $H$ onto $X$, respectively.
\end{theorem}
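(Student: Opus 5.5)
The statement is the abstract range-identity theorem of Kirsch and Grinberg. I would follow the classical argument in \cite{kirsch2007factorization}, which has three steps.

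\textbf{Step 1: spectral structure of $F$.} Since $I+irF$ is unitary, $F$ is a normal operator. Indeed, unitarity gives $r\,F^*F = i(F^*-F)$ and $r\,FF^* = i(F^*-F)$, so $F^*F = FF^*$. Because $F$ is compact and normal, the spectral theorem provides an orthonormal basis $\{\psi_j\}$ of $H$ (using that $F$ is one-to-one) with $F\psi_j=\lambda_j\psi_j$. The eigenvalues satisfy $|1+ir\lambda_j|=1$, so they lie on the circle through $0$ with center $i/r$, and they accumulate only at $0$. Consequently $(F^*F)^{1/4}\psi_j=|\lambda_j|^{1/2}\psi_j$. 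In particular $\mathrm{Ran}(F^*F)^{1/4}$ consists of those $\phi$ with $\sum_j |\langle\phi,\psi_j\rangle|^2/|\lambda_j|<\infty$.

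\textbf{Step 2: the key estimate.} Set $\phi_j:=B^*\psi_j/\sqrt{\lambda_j}$, using a fixed branch of the root. From $F=BAB^*$ we get $\langle A\phi_j,\phi_\ell\rangle=\lambda_j\delta_{j\ell}/(\sqrt{\lambda_j}\overline{\sqrt{\lambda_\ell}})$. Hence $\langle A\phi_j,\phi_j\rangle = \lambda_j/|\lambda_j| =: s_j$, which lies on the unit circle. The aim is to show
\[
c\,\|\phi\|^2 \le |\langle A\phi,\phi\rangle|
\]
for all $\phi\in\overline{\mathrm{Ran}(B^*)}$. This is the main obstacle. I would argue by contradiction: suppose there is a sequence $\phi_m\in\overline{\mathrm{Ran}B^*}$ with $\|\phi_m\|=1$ and $\langle A\phi_m,\phi_m\rangle\to0$. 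By reflexivity, pass to a weakly convergent subsequence $\phi_m\rightharpoonup\phi$. Compactness of $C$ gives $\langle C\phi_m,\phi_m\rangle\to\langle C\phi,\phi\rangle$. Taking imaginary parts, and using that $A_0$ is self-adjoint, yields $\mathrm{Im}\langle A\phi,\phi\rangle=0$, hence $\phi=0$ by the hypothesis. Then $\langle C\phi_m,\phi_m\rangle\to0$, so $\langle A_0\phi_m,\phi_m\rangle\to0$, which contradicts coercivity of $A_0$. For this to work, $\mathrm{Im}\langle C\phi_m,\phi_m\rangle$ must converge. That holds because $\mathrm{Im}A=\mathrm{Im}C$ is compact; I would state this as a lemma in the self-adjoint-plus-compact setting. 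Applying the estimate to $\phi=\sum_j a_j\phi_j$, together with the boundedness of $A$, shows that $\{\phi_j\}$ is a Riesz basis of $\overline{\mathrm{Ran}B^*}$. In particular $\|\sum a_j\phi_j\|^2\simeq\sum|a_j|^2$.

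\textbf{Step 3: range identification.} Given $\varphi=B\xi$, decompose $\xi$ along the Riesz basis $\{\phi_j\}$ (after projecting onto $\overline{\mathrm{Ran}B^*}$, whose complement lies in $\ker B$). Then $\langle\varphi,\psi_j\rangle=\langle\xi,B^*\psi_j\rangle=\overline{\sqrt{\lambda_j}}\,\langle\xi,\phi_j\rangle$. Since $\sum_j|\langle\xi,\phi_j\rangle|^2<\infty$ by the Riesz property, the criterion of Step 1 gives $\varphi\in\mathrm{Ran}(F^*F)^{1/4}$. Conversely, given $\varphi$ with $\sum_j|\langle\varphi,\psi_j\rangle|^2/|\lambda_j|<\infty$, choose coefficients $a_j$ with $\sum_j|a_j|^2<\infty$ and set $\xi=\sum_j a_j\phi_j$, which converges by the Riesz property. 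Using $B\phi_j$ computed via $A$ and the biorthogonality from Step 2, one checks $B\xi=\varphi$. The bounds in Steps 2 and 3 are equivalences of norms, so $(F^*F)^{-1/4}B$ and its inverse are bounded, which gives the stated isomorphisms.
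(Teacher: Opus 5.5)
The paper does not prove this statement; it quotes it from Kirsch--Grinberg. Your architecture is the standard proof from that source: normality of $F$ from unitarity of $I+irF$, the functionals $\phi_j=B^*\psi_j/\sqrt{\lambda_j}$ with $\langle A\phi_j,\phi_\ell\rangle=s_j\delta_{j\ell}$, the coercivity estimate on $\overline{\mathrm{Ran}(B^*)}$ by a weak-compactness contradiction, then Picard's criterion. Step 2 is sound. The convergence of $\langle C\phi_m,\phi_m\rangle$ already follows from compactness of $C$ and $\phi_m\rightharpoonup\phi$, so you need no extra lemma on $\mathrm{Im}\,A$. Coercivity of the bounded operator $A_0=A-C$ passes from $\mathrm{Ran}(B^*)$ to its closure by continuity.

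Step 3, however, has a type error that breaks the converse direction as written. The $\phi_j$ lie in $X^*$, so $B$ cannot be applied to $\sum_j a_j\phi_j$; even after identifying $X$ with $X^*$, $B\phi_j$ is not a multiple of $\psi_j$. The preimage must be $\xi=\sum_j a_jA\phi_j$ with $a_j=(\varphi,\psi_j)_H/\sqrt{\lambda_j}$, because $BA\phi_j=F\psi_j/\sqrt{\lambda_j}=\sqrt{\lambda_j}\,\psi_j$. The series converges in $X$: for finite $J$, your estimate applied to $\phi=\sum_{j\in J}a_j\phi_j$ gives $c\|\phi\|^2\le|\sum_{j\in J}|a_j|^2s_j|\le\sum_{j\in J}|a_j|^2$, and $A$ is bounded. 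Likewise, in the forward direction you cannot decompose $\xi\in X$ along the $\phi_j\in X^*$, and no projection is involved. What you need is the Bessel inequality $\sum_j|\langle\xi,\phi_j\rangle|^2\le c^{-1}\|\xi\|^2$, which is the dual form of that same upper bound.

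This upper bound is all the proof uses. Your two-sided Riesz-basis claim is not justified by $|\langle A\phi,\phi\rangle|\le\|A\|\|\phi\|^2$, since the unimodular $s_j$ may nearly cancel in $\sum_j|a_j|^2s_j$. The lower bound does hold, by applying the Bessel inequality to $A\phi$, whose coefficients have modulus $|a_j|$, but you do not need it.

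Finally, the isomorphism part tacitly requires $B$ to be injective, as the notation $B^{-1}$ indicates. Then $(F^*F)^{-1/4}B$ is bounded by the Bessel inequality and onto by the construction above. Its inverse is bounded by $\|\xi\|\le c^{-1/2}\|A\|\,\|\psi\|$ for $\xi=B^{-1}(F^*F)^{1/4}\psi$.
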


We will use the above Theorem on the far field operator $F$. First, we show that $F$ satisfies the following factorization. 
\begin{lemma}
 Assuming \ref{assumption_L} and \ref{assumption_n}, 
 \begin{align}\label{factorization_G}
F = \frac{1}{sk^2} G (I-T_{s,k})^{-1}\frac{1}{(n-1)} G^* 
\end{align}
where $G : L^2(D) \to L^2(\Sp^{d-1})$ is defined by $G f = v^\infty$, the far field pattern of the radiating solution to $(-\Delta -k^2) v= k^2 (n-1) f$, for $ f\in L^2(D)$. 
\end{lemma}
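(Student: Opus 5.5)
The plan is to compute all three operators explicitly as integral operators and check that the composition reproduces formula (\ref{far-field}) for $Fg = u^\infty_g$. Throughout, the far field pattern of a radiating solution of the classical Helmholtz equation is taken with the same normalization as in the Remark above with $s=1$. That is, if $(-\Delta-k^2)v=\psi$ with $\psi\in L^2(D)$, then $v^\infty(\hat x)=\int_D e^{-ik\hat x\cdot y}\psi(y)\,dy$. Hence
$$
(Gf)(\hat x)=k^2\int_D e^{-ik\hat x\cdot y}(n(y)-1)f(y)\,dy ,\qquad f\in L^2(D).
$$
Pinning down this normalization, so that it is consistent with the factor $\frac{k^{2-2s}}{s}$ in (\ref{far-field}), is the one point I expect to require care. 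Everything else is bookkeeping. The constant $\frac{1}{sk^2}$ in the statement is exactly what absorbs the mismatch between the two normalizations.

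First I compute the adjoint. Since $n$ is real valued and the kernel is bounded on the compact set $\Sp^{d-1}\times D$, Fubini gives, for $g\in L^2(\Sp^{d-1})$,
$$
(G^*g)(y)=k^2(n(y)-1)\int_{\Sp^{d-1}}e^{ik\hat x\cdot y}g(\hat x)\,d\hat x = k^2(n(y)-1)\,u^{inc}_g(y),\qquad y\in D,
$$
where $u^{inc}_g$ is the Herglotz wave function. By Assumption \ref{assumption_n}, $n-1>c>0$ on $D$, so multiplication by $\frac{1}{n-1}$ is bounded on $L^2(D)$. This yields $\frac{1}{n-1}G^*g = k^2u^{inc}_g|_D$.

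Next, Assumption \ref{assumption_L} ensures that $(I-T_{s,k})^{-1}$ exists on $L^2(D)$. By the Lippmann--Schwinger equation (\ref{LSh}) and linearity in the incident field, $(I-T_{s,k})^{-1}(u^{inc}_g|_D)=u^{tot}_g|_D$, the total field due to $u^{inc}_g$. Therefore
$$
(I-T_{s,k})^{-1}\tfrac{1}{n-1}G^*g = k^2 u^{tot}_g|_D .
$$
Applying $G$ gives
$$
k^4\int_D e^{-ik\hat x\cdot y}(n(y)-1)u^{tot}_g(y)\,dy .
$$
On the other hand, superposing (\ref{far-field}) over $\theta$, again using Fubini, gives
$$
Fg=u^\infty_g=\frac{k^{2-2s}}{s}\,k^{2s}\int_D e^{-ik\hat x\cdot y}(n(y)-1)u^{tot}_g(y)\,dy=\frac{k^2}{s}\int_D e^{-ik\hat x\cdot y}(n(y)-1)u^{tot}_g(y)\,dy .
$$
Comparing the two expressions yields $F=\frac{1}{sk^2}G(I-T_{s,k})^{-1}\frac{1}{n-1}G^*$, which is the claimed identity (\ref{factorization_G}).
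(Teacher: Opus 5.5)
Your proof is correct and follows essentially the paper's argument. The paper first writes $Fg=\frac1s G(I-T_{s,k})^{-1}Hg$ with the Herglotz operator $H$, then identifies $H=\frac{1}{k^2(n-1)}G^*$ by computing $H^*$; this is your explicit computation $\frac{1}{n-1}G^*g=k^2u^{inc}_g|_D$ read in the other direction, and your normalization of the Helmholtz far field pattern (the Remark with $s=1$) is the one the paper uses.
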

\begin{proof}
    We observe from formula (\ref{far-field}) and (\ref{LSh}) and by linearity that for all $g \in L^2(\mathbb S^{d-1})$ 
\begin{align}\label{equality_farfield}
u_g^\infty(\hat x) = \frac{1}{s} v_g^\infty(\hat x) 
\end{align}
where $v_g^\infty$ is the far field pattern of the radiating solution $v$ to the Helmholtz equation 
\begin{align*}
(-\Delta - k^2) v = k^2 (n-1) (I-T_{s,k})^{-1} \left.\int_{\mathbb S^{d-1}} g(\theta)e^{ik x \cdot \theta}ds(\theta)\right|_D.
\end{align*}
Hence $v_g^\infty = G (I-T_{s,k})^{-1}\left. \int_{\mathbb S^{d-1}} g(\theta)e^{ik  \left<\theta ,\cdot\right> }ds(\theta)\right|_D$. Let us introduce the Herglotz wave operator $H : L^2(\mathbb S^{d-1})\to L^2(D)$ defined by
\begin{align*}
 Hg(x) = \left.\int_{\Sp^{d-1}} g(\theta)e^{i k \theta \cdot x }  ds(\theta) \right|_D = u^{inc}_g|_D(x)\quad.
\end{align*}
Then we see that 
\begin{align}\label{first_factorization}
    Fg = u^\infty_g = \frac{1}{s} v^\infty_g = \frac{1}{s} G (I-T_{s,k})^{-1} Hg.
\end{align}
We now express $H$ in terms of $G^*$. Let us first note that 
\begin{align*}
    H^* f (\theta) = \int_D e^{-ik \theta \cdot x} f(x) dx = v^\infty(\theta)
\end{align*}
where $v^\infty$ is the far field pattern of $v$ solution of 
$$
(-\Delta - k^2) v = f.
$$
Hence $H^* f= G \frac{f}{k^2(n-1)}$ and $H = \frac{1}{k^2 (n-1)}G^*$. Combining this identity with (\ref{first_factorization}), we have proved the desired factorization of the far field operator.
\end{proof}
The recovery of the support of the inhomogeneity relies on the fact that the range of $G$ characterizes the support of $n-1$, as described in the following classical lemma. Let us write the point source wave $\Phi( x,z) :=\Phi_{helm,k}(|x-z|) $ and its far field pattern $\Phi^\infty(\cdot, z) = e^{-ik \langle \cdot, z \rangle}$. 
\begin{lemma}\label{RanG}
     $\Phi^\infty(\cdot, z) \in Ran( G)$ if and only if $z \in D$.
\end{lemma}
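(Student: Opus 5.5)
This is the classical argument for the inhomogeneous-medium factorization method (cf. Kirsch's treatment). Since $G$ concerns only the classical Helmholtz operator, the fractional structure plays no role here. Throughout we use Assumption \ref{assumption_n}: $n-1>c$ on $D$, so multiplication by $k^2(n-1)$ and by its inverse are bounded on $L^2(D)$. We also use the standard interpretation that $D$ is the closure of a Lipschitz domain with connected exterior. Recall that $Gf=v^\infty$, where $v$ is the radiating solution of $(-\Delta-k^2)v=k^2(n-1)f$. Hence $\Phi^\infty(\cdot,z)\in Ran(G)$ exactly when $\Phi^\infty(\cdot,z)$ is the far field of a radiating field $v\in H^2_{loc}(\R^d)$ with $(\Delta+k^2)v$ supported in $D$ and belonging to $L^2(D)$.

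\emph{Step 1 ($z\in D$ implies $\Phi^\infty(\cdot,z)\in Ran(G)$).} I would treat $z$ in the interior of $D$ and pick $\epsilon>0$ with $B_{2\epsilon}(z)\subset D$. Let $\chi\in C^\infty(\R^d)$ satisfy $\chi=0$ on $B_\epsilon(z)$ and $\chi=1$ outside $B_{2\epsilon}(z)$, and set $v:=\chi\Phi(\cdot,z)$. Then $v$ is smooth and radiating, and it coincides with $\Phi(\cdot,z)$ outside $B_{2\epsilon}(z)$, so $v^\infty=\Phi^\infty(\cdot,z)$. Moreover $\psi:=(-\Delta-k^2)v$ is smooth and supported in the annulus $\overline{B_{2\epsilon}(z)}\setminus B_\epsilon(z)\subset D$. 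Setting $f:=\psi/(k^2(n-1))\in L^2(D)$ gives $Gf=\Phi^\infty(\cdot,z)$.

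\emph{Step 2 ($z\notin D$ implies $\Phi^\infty(\cdot,z)\notin Ran(G)$).} Argue by contradiction and suppose $Gf=\Phi^\infty(\cdot,z)$. Let $v$ be the radiating field corresponding to $f$, and set $w:=v-\Phi(\cdot,z)$. The far field of $w$ vanishes, and $w$ solves the Helmholtz equation in $\R^d\setminus(D\cup\{z\})$, which is connected. By Rellich's lemma and unique continuation, $v=\Phi(\cdot,z)$ in $\R^d\setminus(D\cup\{z\})$. Now choose a small ball $B$ around $z$ that is disjoint from $D$. On $B$, the function $v$ is the restriction of an $H^2_{loc}$ field satisfying $(\Delta+k^2)v=0$ in $B$, so it is smooth, and in particular bounded near $z$. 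This contradicts the blow-up of $\Phi(\cdot,z)$ at $z$ in dimensions $d=2,3$. For $d=1$, where $\Phi$ is bounded, I would instead use that $\Phi(\cdot,z)$ fails to be $C^1$ at $z$, because its derivative jumps, whereas $v\in H^2_{loc}\subset C^1$.

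\emph{Boundary points.} The main delicacy is the case $z\in\partial D$. Here I would split according to the convention for $D$. If $D$ is taken closed, Step 1 needs an approximation argument or must be restricted to interior points. For Step 2, I would show that for $z\in\partial D$ and $d\ge2$ the function $\Phi(\cdot,z)$ is not $H^2$ (for $d=3$, not even $H^1$) near $z$ on the exterior side. The identity $v=\Phi(\cdot,z)$ then holds in an open cone at $z$ lying outside $D$, and this is incompatible with $v\in H^2_{loc}$ there. I expect this local regularity estimate near the boundary, which uses the Lipschitz property of $D$ (exterior cone condition), to be the main technical obstacle. The interior cases are routine.
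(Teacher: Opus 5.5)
For $d=2,3$ your argument is the paper's own: the cut-off field $\chi\Phi(\cdot,z)$ for interior points, and Rellich's lemma plus the singularity of $\Phi(\cdot,z)$ for $z\notin D$. Your exterior-cone argument at $z\in\partial D$ supplies a detail the paper skips. One correction there: no approximation argument can put boundary points into $Ran(G)$. Your own cone argument shows $\Phi^\infty(\cdot,z)\notin Ran(G)$ for $z\in\partial D$, so the statement must be read with $D$ the open interior of the support.

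The genuine gap is $d=1$, and it cannot be repaired. Step 2 asserts that $\R^d\setminus(D\cup\{z\})$ is connected. On the line, the point $z$ disconnects it, so Rellich's lemma only yields $v=\Phi(\cdot,z)$ on the two unbounded components. Near $z$, $v$ is a smooth solution of $v''+k^2v=0$ that agrees with $\Phi(\cdot,z)$ on at most one side of $z$. So the jump of $\partial_x\Phi(\cdot,z)$ at $z$ contradicts nothing.

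In fact the lemma is false for $d=1$. Here $L^2(\Sp^0)\cong\mathbb{C}^2$, and $Gf=\bigl(\int_D e^{-iky}k^2(n-1)f\,dy,\ \int_D e^{iky}k^2(n-1)f\,dy\bigr)$ is onto, because $e^{\pm iky}$ are linearly independent in $L^2(D)$ and $n-1>c$ on $D$. Concretely, take $D=[-1,1]$ and any $z\in\R$, and look for $g=ae^{iky}+be^{-iky}$ with $\int_D e^{\mp iky}g\,dy=e^{\mp ikz}$. This is the linear system $2a+b\sin(2k)/k=e^{-ikz}$, $a\sin(2k)/k+2b=e^{ikz}$, whose determinant $4-\sin^2(2k)/k^2$ is positive. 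Then $f:=g/(k^2(n-1))$ gives $Gf=\Phi^\infty(\cdot,z)$ even for $z\notin D$.

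The paper's proof has exactly the same flaw: it asserts $\Phi(\cdot,z)\equiv v$ in the whole exterior of $D\cup\{z\}$. So the lemma holds only for $d=2,3$, and so does Theorem \ref{FM_main}, since an injective operator on $\mathbb{C}^2$ has full range. In one dimension, single-frequency far-field data are just two numbers and cannot locate $D$.
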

\begin{proof} The proof follows the lines of Lemma 11.13 in \cite{colton1998inverse}, we write it here for convenience. Let $z\in D$ and $\epsilon$ small enough such that $B_\epsilon(z) \subset D$. Define $\rho$ to be a radial smooth function such that $\rho = 0$ in $B_{\epsilon/2}(z)$ and $\rho = 1$ in $\R^d \setminus B_\epsilon(z)$. Set $v(x) = \rho(x) \Phi(x,z) \in C^\infty(\R^d)$, then the far field pattern of $v$ is $\Phi^\infty(\cdot, z)$ and it is equal to $G f$, where $f := \left( \frac{1}{k^2 (n-1)} (-\Delta -k^2)v\right)\in L^2(D)$.\\
Let $z\not \in D$. Suppose by contradiction that there exists $f\in L^2(D)$ such that $Gf = \Phi^\infty(\cdot, z)$. Let $v$ be the radiating solution of $(-\Delta - k^2) v= f$ in $\R^d$. Then by Rellich Lemma, $\Phi(\cdot, z) \equiv v$ in the exterior of $D \cup \{z\}$. This contradicts the regularity of $v$. 
\end{proof}
As mentioned in the introduction, we will relate through the standard Kirsch-Grinberg Theorem the ranges of $G$ and $(F^*F)^{1/4}$. However, we need to exclude a set of transmission eigenvalues, that we first need to define in the context of the fractional Helmholtz equation.
\begin{definition}\label{TransmissionEigenvalue}
  We say that $k \in \R^+ \setminus \Lambda$ is a transmission eigenvalue for the inhomogeneous $s$-Helmholtz equation with index $n$ if $k$ satisfies Assumption \ref{assumption_L} and if there exists a non trivial $v \in H^2_0(D)$ and a $w \in L^2(D)$ such that 
   \begin{align}\label{system_te}
   \left\{ \begin{array}{rll}
       (-\Delta - k^2) v &= k^2 (n-1) (I-T_{s,k})^{-1}w \quad &\mbox{ in } D\\
    (-\Delta -k^2) w &= 0 \quad &\mbox{ in } D 
   \end{array} \right.\quad .
    \end{align}
\end{definition}

We can now show that the far field operator satisfies the assumptions of Theorem \ref{KirschTheorem}, provided $k$ is not a transmission eigenvalue as described above. 
\begin{lemma}\label{assumptionF}
    If $k$ is not a transmission eigenvalue in the sense of Definition \ref{TransmissionEigenvalue}, the far field operator $F$ is compact, injective, and $I+\frac{2i |c_d|^2 k^{d-2}}{\pi^{d-1}} F$ is unitary. 
\end{lemma}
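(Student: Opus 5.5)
Compactness will come straight from the integral representation. $F$ is an integral operator on $L^2(\Sp^{d-1})$ with kernel $u^\infty(\hat x;\theta)$, and by (\ref{far-field}) this kernel is continuous, indeed smooth in $\hat x$. Alternatively, the factorization (\ref{factorization_G}) already does the job: $G$ is compact because it is an integral operator with smooth kernel $e^{-ik\hat x\cdot y}$ on bounded $D$. Multiplication by $\frac{1}{n-1}$ is bounded by Assumption \ref{assumption_n}, and $(I-T_{s,k})^{-1}$ is bounded by Assumption \ref{assumption_L}. Hence $F$ is compact.

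For injectivity, I suppose $Fg=0$. By (\ref{first_factorization}) the radiating Helmholtz solution $v$ of $(-\Delta-k^2)v=k^2(n-1)(I-T_{s,k})^{-1}Hg$ has vanishing far field pattern. Rellich's lemma and unique continuation then give $v=0$ outside $D$, so $v\in H^2_0(D)$. Setting $w:=Hg|_D=u^{inc}_g|_D$, which solves $(-\Delta-k^2)w=0$ in $D$, the pair $(v,w)$ solves (\ref{system_te}). Since $k$ is not a transmission eigenvalue, $v=0$. Then $(n-1)(I-T_{s,k})^{-1}w=0$, and $n-1>c$ forces $w=(I-T_{s,k})u^{tot}=0$ in $D$. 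A Herglotz wave function vanishing on an open set vanishes identically by analyticity, and the Herglotz operator is injective on $L^2(\Sp^{d-1})$ (the Fourier transform of $g\,ds$ is supported on the sphere), so $g=0$.

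For unitarity I will follow the classical argument (Colton–Kress, Theorem 3.21, or Kirsch–Grinberg). The goal is the identity $F-F^*=\frac{2i|c_d|^2k^{d-2}}{\pi^{d-1}}F^*F$ together with the reciprocity-type relation needed to get $FF^*=F^*F$; unitarity of $I+irF$ with $r=\frac{2|c_d|^2k^{d-2}}{\pi^{d-1}}$ follows from these. I intend to work at the Helmholtz level. By the factorization, $Fg=\frac1s v_g^\infty$, where $v_g=\frac1s\times$ (the Helmholtz scattered field) solves $(-\Delta-k^2)v=k^2(n-1)u^{tot}_g$. The key structural point is that the total field $u^{tot}_g$ satisfies the fractional equation, not the Helmholtz one. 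I will therefore take $u_g$ and $u_h$, the fractional scattered fields, and apply Green's identity to the fractional operator on a large ball $B_R$. This uses the nonlocal identity $\int(\FLs\phi)\bar\psi=\int\phi\overline{\FLs\psi}$ on $\R^d$, combined with the Sommerfeld asymptotics from the Remark, in which the far-field of $u$ carries the normalization $c_dk^{(d-3)/2}\pi^{-(d-1)/2}$.

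This last step is the main obstacle. Green's formula is not local for $\FLs$, so I cannot simply integrate over $\partial B_R$. My first attempt will be the symbol identity: write $u_g=\Phi_{s,k}*\psi_g$ with $\psi_g=k^{2s}(n-1)u^{tot}_g$. Since $n$ is real, $\operatorname{Im}\int_D \psi_g\overline{u^{tot}_g}=0$, which gives $\operatorname{Im}\int \psi_g\overline{u_g}=-\operatorname{Im}\int\psi_g\overline{u^{inc}_g}$. I then compute $\operatorname{Im}\langle \Phi_{s,k}*\psi,\psi\rangle$ using $\operatorname{Im}\Phi_{s,k}=\frac{k^{2-2s}}{s}\operatorname{Im}\Phi_{helm,k}$, since $\Phi^\Delta_{s,k}$ is real, i.e. the limiting absorption contributes only at $|\xi|=k$. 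This yields $\operatorname{Im}\langle u_g,\psi_g\rangle=C\|u_g^\infty\|^2$. Polarizing in $g,h$ and expressing $\int\psi_g\overline{u^{inc}_h}$ as $\langle Fg,h\rangle$ up to constants then gives the identity, and I will check that the constants combine to $r$.
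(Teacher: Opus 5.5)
Your compactness and injectivity arguments match the paper's. You also supply two steps the paper skips: $w=0$ follows from $v=0$ because $n-1>c$ and $I-T_{s,k}$ is invertible, and the Herglotz operator is injective.

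For unitarity you take a genuinely different route. You abandon Green's identity because $\FLs$ is nonlocal; the paper keeps it by applying it only to the local part.
\begin{itemize}
\item The paper splits $u_g=v_g+w_g$ with $w_g=\Phi^\Delta_{s,k}*k^{2s}(n-1)u^{tot}_g$, and notes that $v^{tot}_g=v_g+Hg$ solves $(-\Delta-k^2-s^{-1}k^2(n-1))v^{tot}_g=s^{-1}k^2(n-1)w_g$.
\item It applies Green's second identity on $B_R$. Since convolution with $\Phi^\Delta_{s,k}$ has a real symmetric kernel, the volume terms vanish.
\item It then runs Kirsch's boundary computation on $\partial B_R$, using the far-field asymptotics and Green's representation formula. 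This yields $F-F^*=irF^*F$ directly as a sesquilinear identity in $g,h$.
\end{itemize}
Your argument stays in the volume.
\begin{itemize}
\item Reality of $n$ gives $\operatorname{Im}\int_D\psi_g\overline{u^{tot}_g}=0$.
\item Reality and evenness (radiality) of $\Phi^\Delta_{s,k}$ give $\operatorname{Im}\langle\Phi_{s,k}*\psi,\psi\rangle=\frac{k^{2-2s}}{s}\operatorname{Im}\langle\Phi_{helm,k}*\psi,\psi\rangle$. Note that evenness is needed here, not only reality.
\item The formula $\operatorname{Im}\Phi_{helm,k}(x)=\gamma\int_{\Sp^{d-1}}e^{ik\hat\xi\cdot x}\,ds(\hat\xi)$ turns this into $\operatorname{Im}(Fg,g)=\gamma\|Fg\|^2$, which polarizes to $F-F^*=2i\gamma F^*F$.
\end{itemize}

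Both proofs rest on the same fact: the only non-self-adjoint part of $\Phi_{s,k}$ is its Helmholtz part. Yours is shorter, avoids the boundary analysis at infinity and the representation formula, and makes the constant transparent: $r=2\gamma$. This equals $2|c_d|^2k^{d-2}/\pi^{d-1}$ for $d=2,3$. For $d=1$ it matches only with the correct radiating fundamental solution $\frac{i}{2k}e^{ik|x|}$, so that $|c_1|=\frac12$. The appendix instead states $e^{ik|x|}/k$ with $c_1=1$, so expect a discrepancy when you check constants in one dimension. The paper's route mirrors the classical argument and shows the ``Helmholtz plus real symmetric perturbation'' structure at the PDE level.

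Finally, you do not need a reciprocity relation or $FF^*=F^*F$, which your plan promises but never provides. The identity $F-F^*=irF^*F$ gives $(I+irF)^*(I+irF)=I$. An isometry of the form $I+{}$compact is Fredholm of index zero, hence surjective, hence unitary. This is what the paper uses implicitly.
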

\begin{proof}
    $F$ is compact by the compactness of the embedding $H^1(\Sp^{d-1}) \hookrightarrow L^2(\Sp^{d-1})$. Let $g\in L^2(\Sp^{d-1})$ be such that $Fg = 0$. Let $u$ be the scattered field solution of (\ref{scattering_pb}) with $u^{inc}|_D = Hg$. Let $w:= Hg$ and $v := \Phi_{helm,k}*k^{2}(n-1)(I-T_{s,k})^{-1}w $. Then $(v,w)\in H^2_0(D) \times L^2(D)$ and  
    \begin{align*}
   \left\{ \begin{array}{rll}
       (-\Delta - k^2) v &= k^2 (n-1) (I-T_{s,k})^{-1}w \quad &\mbox{ in } D\\
    (-\Delta -k^2) w &= 0 \quad &\mbox{ in } D 
   \end{array} \right. \quad.
    \end{align*}
    Since $k$ is not a transmission eigenvalue, $v = w = 0$. In particular $g = 0$. \\
    For a function $g\in L^2(\Sp^{d-1})$, we introduce $u_g$ the radiating solution of the scattering problem (\ref{scattering_pb}) with incident field $Hg$. Let us denote by $v_g$ its Helmholtz part and by $w_g$ the remainder part such that $u_g = v_g + w_g$ and
    \begin{align}
         w_g &= \Phi_{s,k}^\Delta*k^{2s}(n-1) u^{tot}_g \label{eq_wgs}\\
         v_g &= s^{-1}\Phi_{helm,k}*k^{2}(n-1) u^{tot}_g \label{eq_vgs}
    \end{align}
   where $u^{tot}_g = u_g + Hg$ denotes the total field. Using (\ref{eq_vgs}), we see that the total Helmholtz field $v^{tot}_g = v_g + Hg $ satisfies the equation 
\begin{align}\label{eq_tot_helm}
   (-\Delta - k^2 - s^{-1} k^2(n-1)) v^{tot}_g = s^{-1}k^2(n-1) w_g .
    \end{align}
    Let $R$ be large enough such that $D \subset B_R$. Using the notations introduced above for $g,h \in L^2(\Sp^{d-1})$ and using (\ref{eq_tot_helm}) we have
\begin{align*}
        \int_{B_R} v^{tot}_g \Delta \overline{v^{tot}_h} - \overline{v^{tot}_h}\Delta v^{tot}_g &=  \int_D \overline{v^{tot}_h} s^{-1}k^2 (n-1) w_g - \int_D v^{tot}_g s^{-1}k^2 (n-1) \overline{w_h}\\
        &= \int_D \overline{u^{tot}_h} s^{-1}k^2(n-1)w_g\quad -\int_D u^{tot}_g s^{-1}k^2 (n-1) \overline{w_h} .
    \end{align*}
Since the convolution operator $\Phi_{s,k}^\Delta*$ is self-adjoint (see item (i) of Lemma (\ref{lemma_Phi_delta})) and $w_g, w_h$ are given by (\ref{eq_wgs}), we deduce from the above calculation that for all $R$ large enough such that $D \subset B_R$
\begin{align*}
       \int_{B_R} v^{tot}_g \Delta \overline{v^{tot}_h} - \overline{v^{tot}_h}\Delta v^{tot}_g&= 0\quad.
\end{align*}
The rest of the arguments follow the lines of the proof of Theorem 1.8 in \cite{kirsch2007factorization}, we write them here for convenience. After an integration by parts, we obtain 
\begin{align}\label{partialBR}
    \int_{\partial B_R} v^{tot}_g \frac{\partial \overline{v^{tot}_h}}{\partial\nu} ds -  \overline{v^{tot}_h} \frac{\partial v^{tot}_g}{\partial\nu} ds =0.
\end{align}
The integral can be split into four terms by decomposing $v^{tot}_g = v_g + Hg$ and $v^{tot}_h = v_h + Hh$. The term containing only the incident waves $Hg$ and $Hh$ vanishes by Green's second identity. It is straightforward to see that $\frac{\partial}{\partial |x|} \Phi_{helm,k}(x) \sim ik \Phi_{helm,k}(x)$ at infinity. Hence, using the asymptotic formula for $\Phi_{helm,k}$ (\ref{asymotic_fund_sol_Helm}), the scattered fields verify asymptotically 
\begin{align*}
    v_g \frac{\partial \overline{v_h}}{\partial\nu} ds -  \overline{v_h} \frac{\partial v_g}{\partial\nu} = -2i|c_d|^2 k^{d-2} \frac{v^\infty_g(\hat x) \overline{v_h^\infty}(\hat x)}{(\pi |x|)^{d-1}} + O(|x|^{-d}) .
\end{align*}
We deduce that 
\begin{align*}
    \int_{\partial B_R} v_g \frac{\partial \overline{v_g}}{\partial\nu} ds -  \overline{v_g} \frac{\partial v_g}{\partial\nu} ds  \xrightarrow[R \to \infty]{} -\frac{2i|c_d|^2 k^{d-2}}{\pi^{d-1}}\int_{\Sp^{d-1}} v_g^\infty \overline{v_h^\infty} ds = -\frac{2i|c_d|^2 k^{d-2}}{\pi^{d-1}}(Fg, Fh).
\end{align*}
We now use the representation of $v_{g}, v_h$ outside $B_R$ given by Green's representation formula (see for example Theorem 1.3 in \cite{kirsch2007factorization})
\begin{align*}
    v_{g}(x) = \int_{\partial B_R} v_{g}(y) \frac{\partial \Phi_{helm,k}(x-y)}{\partial \nu} - \frac{\partial v_{g}(y)}{\partial \nu} \Phi_{helm,k}(x-y) ds(y), \quad x\in \R^d \setminus \overline{B_R}
\end{align*}
to obtain the following formula for the far field pattern  
\begin{align*}
    v_{g}^\infty (\hat x) = \int_{\partial B_R}  v_{g}(y) \frac{\partial e^{-i \hat{x}\cdot y}}{\partial \nu} - \frac{\partial v_{g}(y)}{\partial \nu} e^{-ik \hat{x}\cdot y} ds(y).
\end{align*}
The formulas hold for $v_h$ as well. We deduce that 
\begin{align*}
    \int_{\partial B_R} Hg \frac{\partial \overline{v_h}}{\partial \nu} - \overline{v_h} \frac{\partial Hg}{\partial \nu} ds &= \int_{\Sp^{d-1}} g(\theta) \int_{\partial B_R} \frac{\partial \overline{v_{h}}(y)}{\partial \nu} e^{ik \theta\cdot y} - \overline{v_{h}}(y) \frac{\partial e^{i \theta\cdot y}}{\partial \nu} ds(y) d\theta\\
    &= - (g, Fh)
\end{align*}
and similarly 
\begin{align*}
    \int_{\partial B_R} v_g \frac{\partial \overline{Hh}}{\partial \nu} - \overline{Hh} \frac{\partial v_g}{\partial \nu} ds &= (Fg, h).
\end{align*}
Taking $R \to \infty$ in (\ref{partialBR}), we obtain 
 $$
 0 = -(g,Fh) + (Fg,h)  - \frac{2i |c_d|^2 k^{d-2}}{\pi^{d-1}} (Fg,Fh) \quad \mbox{ for all } g,h \in L^2(\Sp^{d-1}),
 $$
 that is $F-F^* - \frac{2i |c_d|^2 k^{d-2}}{\pi^{d-1}} F^*F=0.$
 We deduce that 
 $$
 \left(I+\frac{2i |c_d|^2 k^{d-2}}{\pi^{d-1}} F\right)^* \left(I+ \frac{2i |c_d|^2 k^{d-2}}{\pi^{d-1}} F\right) = I + \frac{2i |c_d|^2 k^{d-2}}{\pi^{d-1}}\left( F -F^* - \frac{2i |c_d|^2 k^{d-2}}{\pi^{d-1}} F^* F\right) = I .
 $$
 Hence we proved that $I + i r F$ is unitary with $r=\frac{2 |c_d|^2 k^{d-2}}{\pi^{d-1}} >0$. 
\end{proof}
We now verify the assumptions of Theorem \ref{KirschTheorem} on the middle operator.
\begin{lemma} \label{assumptionS}The operator $S := (I-T_{s,k})^{-1}\frac{1}{(n-1)}$ satisfies
\begin{itemize}
 \item[(i)] $S = S_0 + C$ where $C$ is compact, $\langle S_0f,f \rangle \in \R$ and $\langle S_0f,f \rangle \geq c \|f\|^2_{L^2(D)}$ for some $c>0$ and for all $f \in \overline{Ran(G^*)}$,
    \item[(ii)]  $Im ( f, Sf) \not = 0 $ for all $f \in \overline{Ran(G^*)}\setminus \{0\}$.
\end{itemize}
   \end{lemma}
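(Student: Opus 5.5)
The natural plan is to write $S$ in a form where the Lippmann--Schwinger structure is visible. Set $\phi := Sf$, so that $(I-T_{s,k})\phi = f/(n-1)$. Then
\[
f = (n-1)\phi - (n-1)T_{s,k}\phi .
\]
Taking the inner product with $\phi$ gives
\[
\langle f, Sf\rangle = \langle f,\phi\rangle = \int_D (n-1)|\phi|^2\,dx - \overline{k^{2s}\int_D\int_D \Phi_{s,k}(|x-y|)(n(y)-1)\phi(y)(n(x)-1)\overline{\phi(x)}\,dy\,dx}.
\]
The first term is real and bounded below by $c\|\phi\|^2$. The second term is a quadratic form of the volume potential with the fundamental solution. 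It is more convenient to work directly with $\psi:=(n-1)\phi$ and $f$. Since $S$ is the inverse of $(n-1)(I-T_{s,k})$, I would first treat the operator
\[
S^{-1} = (n-1) - (n-1)T_{s,k},
\]
whose leading part $(n-1)$ is a positive multiplication operator and whose remainder is compact. Compactness holds because $T_{s,k}$ is compact on $L^2(D)$: its kernel is weakly singular, as follows from the estimates on $\Phi_{s,k}$ in the appendix.

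For (i), I would use the identity $S = (S^{-1})^{-1}$ together with the resolvent trick. With $S_0 := \frac{1}{n-1}$, which is self-adjoint and coercive since $n-1 \le \|n\|_\infty$, we have
\[
S - S_0 = (I-T_{s,k})^{-1}\frac{1}{n-1} - \frac{1}{n-1} = (I-T_{s,k})^{-1}T_{s,k}\frac{1}{n-1}.
\]
This is compact because $T_{s,k}$ is compact and $(I-T_{s,k})^{-1}$ is bounded by Assumption \ref{assumption_L}. Coercivity $\langle S_0 f,f\rangle \ge \|n-1\|_\infty^{-1}\|f\|^2$ then holds on all of $L^2(D)$, so (i) is essentially routine.

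For (ii), I would compute $\operatorname{Im}\langle f,Sf\rangle$ using the decomposition $\Phi_{s,k} = \frac{k^{2-2s}}{s}\Phi_{helm,k} + \Phi^\Delta_{s,k}$. Lemma \ref{lemma_Phi_delta}(i) says that convolution with $\Phi^\Delta_{s,k}$ is self-adjoint, and by the above the term $(n-1)|\phi|^2$ is real. Hence the imaginary part reduces to the Helmholtz contribution:
\[
\operatorname{Im}\langle f,\phi\rangle = \frac{k^2}{s}\,\operatorname{Im}\int_D\int_D \Phi_{helm,k}(|x-y|)\psi(y)\overline{\psi(x)}\,dy\,dx,
\qquad \psi=(n-1)\phi .
\]
The sign of this expression has to be tracked carefully. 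The imaginary part of $\Phi_{helm,k}$ is the smooth kernel proportional to $\int_{\Sp^{d-1}} e^{ik\theta\cdot(x-y)}\,d\theta$. Alternatively, one can use Green's identity on a ball $B_R$ for the radiating solution $v$ of $(-\Delta-k^2)v=\psi$, exactly as in the proof of Lemma \ref{assumptionF}. Either way the quantity is a nonzero multiple (with fixed sign) of
\[
\|v^\infty\|^2_{L^2(\Sp^{d-1})}, \qquad v^\infty(\hat x)=\int_D e^{-ik\hat x\cdot y}\psi(y)\,dy .
\]
So $\operatorname{Im}\langle f,Sf\rangle = 0$ forces $v^\infty=0$, and by Rellich's lemma $v=0$ outside $D$, so $v\in H^2_0(D)$ at least when $D$ is regular enough.

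The main obstacle is turning $v=0$ outside $D$ into a contradiction, and this is where $f\in\overline{Ran(G^*)}$ and the transmission eigenvalue assumption enter. Since $G^* = k^2(n-1)H$, the range of $G^*$ consists of $(n-1)$ times Herglotz functions, and its closure is $(n-1)w$ with $w\in L^2(D)$ satisfying $(-\Delta-k^2)w=0$ in $D$ in the distributional sense. Writing $f=(n-1)w$, we get $\phi = (I-T_{s,k})^{-1}w$, and therefore
\[
(-\Delta-k^2)v = \psi = (n-1)(I-T_{s,k})^{-1}w .
\]
After rescaling by $k^2$, the pair $(v,w)$ is exactly a solution of system (\ref{system_te}) with $v\in H^2_0(D)$. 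Since $k$ is not a transmission eigenvalue, $v=0$, and then $w=0$ (I expect Definition \ref{TransmissionEigenvalue} to be used so that $v=w=0$, as in Lemma \ref{assumptionF}). Hence $f=0$, which proves (ii). To be safe, I would also verify the characterization of $\overline{Ran(G^*)}$ via the density of Herglotz functions in the $L^2(D)$-solutions of the Helmholtz equation.
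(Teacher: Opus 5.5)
Your argument is correct. Part (i) coincides with the paper's ($S_0=\frac{1}{n-1}$, $C=T_{s,k}(I-T_{s,k})^{-1}\frac{1}{n-1}$). The skeleton of (ii) is also the same: vanishing imaginary part $\Rightarrow$ $v^\infty=0$ $\Rightarrow$ $v\in H^2_0(D)$ by Rellich; then $f/(n-1)$ is an $L^2(D)$ Helmholtz solution, and Definition \ref{TransmissionEigenvalue} finishes.

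The difference is in how the imaginary part is evaluated. The paper introduces the auxiliary fractional field $u$ with $(\FLs-k^{2s}n)u=f$ and splits $u=v+w$ along $\Phi_{s,k}=\frac{k^{2-2s}}{s}\Phi_{helm,k}+\Phi^\Delta_{s,k}$. It then integrates by parts in $D$ and removes the $\Phi^\Delta_{s,k}$ contributions through reality, self-adjointness and a cancellation. What remains is $\mathrm{Im}\int_{\partial D}v\,\partial_\nu\overline{v}\,ds$, to which it applies Theorem 2.13 of Colton--Kress.

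You instead write $f=(n-1)(I-T_{s,k})\phi$ with $\phi=Sf$. Because $\Phi_{s,k}$ is radial and $\Phi^\Delta_{s,k}$ is real, the imaginary part of the volume-potential form is the form with kernel $\frac{k^{2-2s}}{s}\mathrm{Im}\,\Phi_{helm,k}$. That kernel is a positive multiple $c_{d,k}$ of $\int_{\Sp^{d-1}}e^{ik\theta\cdot(x-y)}\,d\theta$. This gives the closed identity $\mathrm{Im}(f,Sf)=-\frac{k^2}{s}c_{d,k}\|v^\infty\|^2_{L^2(\Sp^{d-1})}$ for every $f\in L^2(D)$, in the paper's convention $(a,b)=\int_D a\overline{b}$.

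Your route is shorter and needs neither a boundary integral over $\partial D$ nor the auxiliary field $u$. It also gives a definite sign of $\mathrm{Im}(f,Sf)$, which is more than the nonvanishing that Theorem \ref{KirschTheorem} requires. The paper's route mirrors the classical Green's-identity proof and the computation in Lemma \ref{assumptionF}.

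Two minor points. First, the positivity of $\mathrm{Im}\,\Phi_{helm,k}$ holds for the true radiating fundamental solution; in $d=1$ this is $\frac{i}{2k}e^{ik|x|}$, not the expression printed in the appendix. Second, for $\overline{Ran(G^*)}$ you only need the inclusion into $(n-1)$ times the closed subspace of distributional Helmholtz solutions in $L^2(D)$. That follows at once from $G^*=k^2(n-1)H$, so no density result for Herglotz functions is needed.
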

\begin{proof}
    \begin{itemize}
        \item[(i)] Noticing that $(I-T_{s,k})^{-1} = I + T_{s,k}(I-T_{s,k})^{-1}$, the decomposition is clear from the definition of $S$, we take
        \begin{align*}
             S_0 f:= \frac{f}{(n-1)} \quad \mbox{and} \quad C f := T_{s,k}(I-T_{s,k})^{-1}\frac{f}{(n-1)}.
        \end{align*}
$C$ is compact by the compact embedding $H^{2s}(D) \hookrightarrow L^2(D)$. By Assumption \ref{assumption_n} on $n$, $S_0$ is bounded and
$$
\langle S_0 f, f\rangle \geq \frac{1}{ \|n-1\|_\infty} \|f\|^2_{L^2(D)}. 
$$
\item[(ii)] Let us assume that $f \in \overline{Ran(G^*)} = ker(G)^\perp$ and $Im(f, Sf) = - Im (Sf, f) = 0$. We will show that $f = 0$. Since for all $\phi \in C_c^\infty(D)$, $\frac{1}{k^2(n-1)} (-\Delta-k^2)\phi \in \ker G$ , we have
        \begin{align*}
        \int_D f \overline{g} =0 \quad \forall g \in \ker G \quad  \implies \quad \int_D f \frac{1}{(n-1)k^2}(-\Delta-k^2)\overline{\phi} =0 \quad \forall \phi \in C_c^\infty(D).
    \end{align*}
    Hence $w_0 :=\frac{f}{s k ^{2s}(n-1)} \in L^2(D)$ is a solution of 
    $$
    (-\Delta -k^2) w = 0 \mbox{ in } D
    $$
    in the sense of distributions. We chose the constant $\frac{1}{sk^{2s}}$ for future convenience. Using $(I-T_{s,k})^{-1} = I + T_{s,k}(I-T_{s,k})^{-1}$ and formula (\ref{u_integral_rep})-(\ref{LSh}), we have \begin{align*}
       & I_m((I-T_{s,k})^{-1}\frac{f}{(n-1)}, f) = 0\\
        \iff & I_m(T_{s,k}(I-T_{s,k})^{-1}\frac{f}{k^{2s}(n-1)}, f) = 0\\
        \iff &I_m(u, f) = 0
    \end{align*} 
    where $u$ is the radiating solution of $(\FLs -k^{2s}n) u = f$. Let \begin{align}
        v &:= \Phi_{helm,k} * \frac{k^{2-2s}}{s} (f+ k^{2s}(n-1)u) \label{vs}\\
        w &:= \Phi_{s,k}^\Delta *(f+ k^{2s} (n-1)u),\label{ws}
    \end{align}
    then $u = v + w$. Using that $(-\Delta -k^2) v = \frac{k^{2-2s}}{s} f + \frac{k^2}{s}(n-1) u $, we compute 
    \begin{align*}
        \int_D u \overline{f} dx&= \int_D v \overline{f} + \int_D w \overline{f}\\
        &=\int_D v \frac{s}{k^{2-2s}}\left((-\Delta -k^2) \overline{v} - \frac{k^2}{s}(n-1) \overline{u}\right) + \int_D \Phi_{s,k}^\Delta *(k^{2s}(n-1) u) \overline{f} + \int_D \Phi_{s,k}^\Delta*f \overline{f} .
    \end{align*}
We see that since $\Phi_{s,k}^\Delta$ is real valued (see item (i) of Lemma \ref{lemma_Phi_delta}), the last term will not have a contribution to the imaginary part. After integrating by part in the first term and taking the imaginary part, only the boundary term will remain. Next, we use that $\Phi_{s,k}^\Delta$ is self-adjoint and we obtain   
\begin{align}\label{step1}
  Im  \int_D u \overline{f} =& -\frac{s}{k^{2-2s}} Im\int_{\partial D} v \frac{\partial \overline{v}}{\partial \nu} ds - k^{2s} Im \int_D (n-1)v \overline{u} + k^{2s} Im \int_D (n-1)u \Phi_{s,k}^\Delta *\overline{f}.
  \end{align}
Using (\ref{ws}), the last term can be rewritten
  \begin{align*}
k^{2s} Im \int_D (n-1)u (\overline{w} - \Phi_{s,k}^\Delta * (k^{2s}(n-1) \overline{u}) = k^{2s} Im \int_D (n-1)u \overline{w} 
\end{align*}
hence, using that $u = v + w$, the last two terms of  (\ref{step1}) cancel and 
\begin{align*}
    0 = Im \int_D u \overline{f} = -\frac{s}{k^{2-2s}} Im\int_{\partial D} v \frac{\partial \overline{v}}{\partial \nu} ds. 
\end{align*}

Using Theorem 2.13 in \cite{colton1998inverse}, we conclude that $v \equiv 0$ in $\R^d \setminus \overline D$ and $v, w_0 \in H^2_0(D)\times L^2(D)$ are solutions to 
\begin{align*}
    (-\Delta -k^2) v = k^{2}(n-1) (I-T_{s,k})^{-1} w_0 \quad &\mbox{ in } D\\
    (-\Delta -k^2) w_0 = 0 \quad &\mbox{ in } D.
\end{align*}
Since $k$ is not a transmission eigenvalue, $v=w_0=0$, hence in particular $f = 0$.
\end{itemize}
\end{proof}
Combining Theorem \ref{KirschTheorem}, the factorization (\ref{factorization_G}) and Lemmas \ref{RanG}, \ref{assumptionF}, \ref{assumptionS}, we proved the following Theorem which is the main result of this paper.
\begin{theorem}\label{FM_main}
Let $0<s<1$, and $d = 1,2,3$. Assume \ref{assumption_n} and that $k$ is not a transmission eigenvalue in the sense of Definition \ref{TransmissionEigenvalue}. Then 
    $\Phi^\infty(\cdot, z) \in Ran( (F^* F)^{1/4})$ if and only if $z \in D$ .
\end{theorem}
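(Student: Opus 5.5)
The proof will assemble the pieces already established, with Theorem \ref{KirschTheorem} (Kirsch--Grinberg) doing the main work. We take $H = L^2(\Sp^{d-1})$ and $X = L^2(D)$, which is a Hilbert space and hence reflexive, identified with its dual. By the factorization (\ref{factorization_G}) we have
\[
F = B A B^* \quad\text{with } B := G \text{ and } A := \tfrac{1}{sk^2} S, \qquad S = (I-T_{s,k})^{-1}\tfrac{1}{(n-1)},
\]
and Lemma \ref{assumptionS} is to be read with this scaling in mind. Multiplying by the positive constant $\frac{1}{sk^2}$ changes neither the sign of $\operatorname{Im}\langle \phi, A\phi\rangle$ nor the coercivity and self-adjointness of the leading part. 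So Lemma \ref{assumptionS} gives $A = A_0 + C$ with $A_0 = \frac{1}{sk^2}\frac{1}{n-1}$ self-adjoint and coercive, $C$ compact, and $\operatorname{Im}\langle\phi,A\phi\rangle\neq 0$ for $0\neq\phi\in\overline{Ran(G^*)}$.

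Lemma \ref{assumptionS}(ii) is stated with $(f,Sf)$ while Theorem \ref{KirschTheorem} uses $\langle \phi, A\phi\rangle$. These differ only by complex conjugation, so the nonvanishing condition is unaffected. Coercivity in Lemma \ref{assumptionS}(i) actually holds on all of $L^2(D)$, which certainly covers $Ran(G^*)$.

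Next, Lemma \ref{assumptionF} supplies the remaining hypotheses of Theorem \ref{KirschTheorem}, under the standing assumption that $k$ is not a transmission eigenvalue (which by Definition \ref{TransmissionEigenvalue} includes Assumption \ref{assumption_L}). It shows that $F$ is compact and injective, and that $I + irF$ is unitary with $r = 2|c_d|^2k^{d-2}/\pi^{d-1} > 0$. Theorem \ref{KirschTheorem} then yields $Ran(G) = Ran((F^*F)^{1/4})$. Combining this with Lemma \ref{RanG}, which says $\Phi^\infty(\cdot,z)\in Ran(G)$ if and only if $z\in D$, gives the claim immediately.

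The only step needing real care is checking that the hypotheses line up exactly: the scaling constant in $A$, and the orientation of the inner product in the imaginary-part condition. Both are harmless, as noted above, so I expect no genuine obstacle. The substantive work was already done in Lemmas \ref{assumptionF} and \ref{assumptionS}.
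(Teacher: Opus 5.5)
Your proposal is correct and is essentially the paper's own argument: apply Theorem~\ref{KirschTheorem} to the factorization (\ref{factorization_G}) with $B=G$ and middle operator $\frac{1}{sk^2}(I-T_{s,k})^{-1}\frac{1}{n-1}$. Lemmas~\ref{assumptionF} and \ref{assumptionS} supply the hypotheses, giving $Ran(G)=Ran((F^*F)^{1/4})$, and Lemma~\ref{RanG} then finishes the proof. One small logical slip: ``$k$ is not a transmission eigenvalue'' does not by itself imply Assumption~\ref{assumption_L}, since the definition only classifies $k\notin\Lambda$. However, the paper declares Assumption~\ref{assumption_L} a standing hypothesis (``From now on we assume'') as soon as it is introduced, so nothing breaks.
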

\noindent An immediate corollary of Theorem \ref{FM_main} is the following uniqueness theorem for the support of the inhomogeneity from far field measurements at any frequency $k \in \R^+$ (except a discrete set of exceptional values, see Theorem \ref{discrete_transmissionEigenvalues} for the discreteness of the transmission eigenvalues). 
\begin{theorem}
    Let $0<s<1$, and $d = 1,2,3$. Assume $n_1$ and $n_2$ satisfy \ref{assumption_n} and that $k$ not a transmission eigenvalue for $n_1$ and $n_2$ in the sense of Definition \ref{TransmissionEigenvalue}. Then if the far field patterns $u^\infty_j$ corresponding to $n_j$  coincide, that is, $u_1^\infty(\hat x; \theta) = u^\infty_2(\hat x; \theta)$ for all $\hat x, \theta \in \mathbb{S}^{d-1}$, the supports $D_1$ and $D_2$ of $n_1-1$ and $n_2-1$ coincide.
\end{theorem}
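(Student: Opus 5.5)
The plan is to reduce the uniqueness statement to the range characterization of Theorem \ref{FM_main}, applied separately to each index. First note that the far-field operator $F_j$ for $n_j$, $j=1,2$, depends only on the kernel $u^\infty_j(\hat x;\theta)$:
$$(F_jg)(\hat x)=\int_{\Sp^{d-1}} g(\theta)\,u^\infty_j(\hat x;\theta)\,d\theta .$$
So the hypothesis $u^\infty_1=u^\infty_2$ on $\Sp^{d-1}\times\Sp^{d-1}$ gives $F_1=F_2=:F$ as operators on $L^2(\Sp^{d-1})$. Hence $F_1^*F_1=F_2^*F_2$, and by uniqueness of the nonnegative square root of a nonnegative self-adjoint operator (applied twice), $(F_1^*F_1)^{1/4}=(F_2^*F_2)^{1/4}$. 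The two ranges therefore coincide.

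Next I apply Theorem \ref{FM_main} to each $n_j$. This is allowed because each $n_j$ satisfies Assumption \ref{assumption_n}, and $k$ is by hypothesis not a transmission eigenvalue for either index. Definition \ref{TransmissionEigenvalue} builds Assumption \ref{assumption_L} into the notion of a non-eigenvalue, so $k\notin\Lambda$ for both media and every object in the factorization is well defined. The characterization then reads
$$D_j=\{z\in\R^d:\ \Phi^\infty(\cdot,z)\in Ran((F_j^*F_j)^{1/4})\},\qquad j=1,2.$$
The test functions $\Phi^\infty(\cdot,z)=e^{-ik\langle\cdot,z\rangle}$ depend only on $k$ and $z$, not on the medium. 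Since the ranges are equal, the two sets on the right coincide, and so $D_1=D_2$.

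There is no genuine obstacle here; the argument is essentially bookkeeping. The only points to check are these. The normalization constants in the definition of $F$ and in the unitarity statement of Lemma \ref{assumptionF} depend only on $k$, $s$ and $d$, so they are the same for both media. The exceptional set for each medium must be avoided simultaneously, which is exactly what the hypothesis provides. Finally, the phrase ``at any frequency except a discrete set'' is justified by Theorem \ref{discrete_transmissionEigenvalues}: the union of two discrete sets with no finite accumulation point is again such a set.
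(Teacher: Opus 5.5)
Your argument is correct and is the paper's own route: the paper states this theorem as an immediate corollary of Theorem \ref{FM_main}, since $u_1^\infty=u_2^\infty$ forces $F_1=F_2$, hence equal ranges of $(F_j^*F_j)^{1/4}$, and these ranges are tested by the medium-independent functions $\Phi^\infty(\cdot,z)$.

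One small correction concerns how you get $k\notin\Lambda$. Definition \ref{TransmissionEigenvalue} only calls elements of $\R^+\setminus\Lambda$ transmission eigenvalues, so a $k\in\Lambda$ is vacuously \emph{not} a transmission eigenvalue. The hypothesis therefore does not by itself give $k\notin\Lambda_1\cup\Lambda_2$. That condition comes from the paper's standing Assumption \ref{assumption_L}, imposed ``from now on'' for each index, and it is needed anyway for $u_j^\infty$ to be defined.
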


\subsection{Discreteness of the Transmission Eigenvalues}
In this section, we come back to the notion of transmission eigenvalue introduced in Definition \ref{TransmissionEigenvalue}. We show that the set of such values is discrete, and can only accumulate at $+\infty$, hence it is not an unreasonable assumption for Theorem \ref{FM_main}.
\begin{theorem}\label{discrete_transmissionEigenvalues}
    The set of transmission eigenvalues $k \in \R^+\setminus \Lambda$ in the sense of Definition \ref{TransmissionEigenvalue} is discrete, possibly empty, and can accumulate only at $+\infty$. 
\end{theorem}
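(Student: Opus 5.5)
We will recast the transmission eigenvalue problem of Definition \ref{TransmissionEigenvalue} as the vanishing of an analytic family of Fredholm operators of index zero in the parameter $k$, and then apply the analytic Fredholm theorem. Discreteness with accumulation only at $+\infty$ then follows, provided we exhibit one $k$ in each connected component of the admissible domain at which the operator is invertible.

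\textbf{Step 1: eliminate $w$.} Since $v\in H^2_0(D)$ and $(-\Delta-k^2)w=0$, the pair $(v,w)$ solves (\ref{system_te}) exactly when $w=(I-T_{s,k})\big[\frac{1}{k^2(n-1)}(-\Delta-k^2)v\big]$. Put $u:=\frac{1}{k^2(n-1)}(-\Delta-k^2)v\in L^2(D)$. The eigenvalue condition becomes
\[
(-\Delta-k^2)(I-T_{s,k})\frac{1}{k^2(n-1)}(-\Delta-k^2)v=0 \quad\mbox{in } D
\]
in the distributional sense, with $v\in H^2_0(D)$. Equivalently, in variational form,
\[
\Big((I-T_{s,k})\tfrac{1}{n-1}(-\Delta-k^2)v,\,(-\Delta-k^2)\phi\Big)_{L^2(D)}=0 \quad \forall \phi\in H^2_0(D).
\]
This defines an operator $\mathbb{A}_k:H^2_0(D)\to H^2_0(D)$ via the Riesz representation.

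\textbf{Step 2: split $\mathbb{A}_k$.} Write $\mathbb{A}_k=\mathbb{A}^0+\mathbb{B}_k$, where $\mathbb{A}^0$ comes from the form $\big(\frac{1}{n-1}\Delta v,\Delta\phi\big)$. By Assumption \ref{assumption_n}, $\mathbb{A}^0$ is coercive on $H^2_0(D)$, because $\|\Delta v\|_{L^2}$ is an equivalent norm there. The remaining terms involve either lower-order derivatives of $v$ (at most first order after integrating by parts, or $k^2v$ terms), which are compact by Rellich, or $T_{s,k}$. The operator $T_{s,k}$ maps $L^2(D)\to H^{2s}(D)$ and is therefore compact on $L^2(D)$. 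So $\mathbb{B}_k$ is compact and $\mathbb{A}_k$ is Fredholm of index zero.

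\textbf{Step 3: analyticity.} Dependence on $k$ is analytic, since the entries are polynomials in $k$, together with $k^{2s}\Phi_{s,k}$. From the explicit form of the fundamental solution in the appendix, $\Phi_{s,k}$ depends analytically on $k$ in a complex neighborhood of $\R^+\setminus\Lambda$. We expect to verify that $k\mapsto T_{s,k}$ is analytic in operator norm by differentiating under the integral, using the kernel bounds of Lemma \ref{lemma_Phi_delta}. This is also how the discreteness of $\Lambda$ was obtained in \cite{zilberberg2026limiting}. The analytic Fredholm theorem then shows that either $\mathbb{A}_k$ is nowhere invertible on a connected component, or it fails to be invertible only on a discrete set.

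\textbf{Step 4 (main obstacle): invertibility at one point.} We must find a $k$, in each component of a connected complex neighborhood avoiding $\Lambda$, where $\mathbb{A}_k$ is injective; failure of injectivity is exactly the eigenvalue condition. The natural first attempt is small $k$. As $k\to0$, the norm $\|T_{s,k}\|=O(k^{2s}\cdot\|\Phi_{s,k}\|)$ should tend to zero, given the small-$k$ behavior of $\Phi_{s,k}$ (this must be checked). Then
\[
(\mathbb{A}_kv,v)\ge \Big(\tfrac{1}{\|n-1\|_\infty}-o(1)\Big)\|\Delta v\|^2-Ck^2\|v\|_{H^2}^2 ,
\]
so $\mathbb{A}_k$ is coercive, hence injective, for small $k$, and small $k$ are non-transmission eigenvalues.

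If $\Lambda$ disconnects $\R^+$, we instead work in a complex strip around $\R^+$ minus the discrete set $\Lambda$. This strip is connected, since removing isolated points does not disconnect an open set in $\mathbb{C}$, provided the poles of $(I-T_{s,k})^{-1}$ are isolated there. Alternatively, we avoid inverting $I-T_{s,k}$ altogether, since the reformulation in Step 1 contains $I-T_{s,k}$ only, not its inverse. In that case analyticity of $T_{s,k}$ alone suffices, and $\Lambda$ is removed afterwards. Combining invertibility at small $k$ with the analytic Fredholm theorem gives a discrete set of transmission eigenvalues with no finite accumulation point.
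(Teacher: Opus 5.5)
Your proposal is correct and follows the paper's proof essentially step for step: eliminate $w$ through $I-T_{s,k}$ so that its inverse never appears, test against $(-\Delta-k^2)\phi$, split off the coercive form $\int_D \Delta v\,\Delta\phi\,\frac{dx}{n-1}$ from a compact remainder that is analytic in $k$, get invertibility for small $k$, and apply the analytic Fredholm theorem. The smallness of $T_{s,k}$ as $k\to 0$, which you flag as still to be checked, is supplied in the paper in two steps: it first uses $\Phi_{helm,k}*(-\Delta-k^2)v=v$ for $v\in H^2_0(D)$ to collapse the Helmholtz part of $T_{s,k}$, and then the $L^1(D)$ bound on $\Phi^\Delta_{s,k}$ in Lemma \ref{lemma_Phi_delta}(ii) gives $\|A_1(k)\|\le C(k^s+k^4)$.
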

\begin{proof}
Let $(v,w) \in H_0^2(D)\times L^2(D)$ be solutions of (\ref{system_te}). Dividing the first equation by $k^2(n-1)$, applying $(I-T_{s,k})$ on both sides and multiplying by $(-\Delta -k^2) \phi $ for $\phi\in C_c^\infty(D)$, we obtain
$$
\int_D (I-T_{s,k})\left(\frac{1}{(n-1)} (-\Delta -k^2) v\right) \left( (-\Delta - k^2) \phi \right) = 0 \quad \forall \phi \in C_c^\infty(D).
$$
Expanding the terms and using that $T_{s,k} f= \frac{k^2}{s} \Phi_{helm,k}*(n-1) f + \Phi^\Delta_{s,k}*k^{2s}(n-1)f$, we obtain 
\begin{align}\label{var_te}
    \int_D  \Delta v \Delta \phi \frac{dx}{(n-1)} +k^2 \int_D (\Delta v \phi + v \Delta \phi)  \frac{dx}{(n-1)} + k^4 \int_D v \phi \frac{dx}{(n-1)} &\\+ \frac{k^2}{s}\int_D v (\Delta +k^2) \phi dx -k^{2s}\int_D \left(\Phi^\Delta_{s,k}*(\Delta +k^2) v \right)(\Delta +k^2)\phi dx &= 0\quad \forall \phi \in C_c^\infty(D).\nonumber
\end{align}
We can define the bounded linear forms on $H^2_0(D) \times H^2_0(D)$ 
\begin{align*}
    a_0(v,\phi) = & \int_D  \Delta v \Delta \phi \frac{dx}{(n-1)} \\
    a_1(k)(v,\phi) =& k^2\int_D (\Delta v \phi + v \Delta \phi)  \frac{dx}{(n-1)} +\frac{k^2}{s}\int_D v \Delta \phi dx + k^4\int_D v \phi \left( \frac{1}{(n-1)} + \frac{1}{s} \right) dx\\
    &- k^{2s}\int_D \left(\Phi^\Delta_{s,k}*(\Delta +k^2) v \right)(\Delta +k^2)\phi dx .
\end{align*}
By Riesz representation Theorem, there exists bounded linear operators $A_0, A_1(k) : H^2_0(D) \to H^2_0(D)$ such that $a_0(v,\phi) = (A_0 v, \phi)_{H^2(D)} $ and $a_1(k)(v,\phi) = (A_1(k) v , \phi)_{H^2(D)}$ for all $v,\phi \in H^2_0(D)$. We show that $A_1(k)$ is compact. Let $\phi_n$ be a sequence converging weakly to $0$ in $H^2_0(D)$. We have the bound 
\begin{align}\label{boundA1}
    \| A_1(k) \phi_n \|_{H^2_0(D)}^2 =& a_1(k) (\phi_n, A_1(k)\phi_n) \nonumber\\
   \leq &C(n,s)\left(( k^2+k^4)(\|\phi_n\|_{H^2(D)} \| A_1(k) \phi_n\|_{L^2(D)}+ \|\phi_n\|_{L^2(D)} \| A_1(k) \phi_n\|_{H^2(D)})\right.\nonumber\\
   &+\left. k^{2s}\| \Phi_{s,k}^\Delta*(\Delta+k^2)\phi_n \|_{L^2(D)} \|A_1(k)\phi_n\|_{H^2(D)}\right) .
\end{align}
Since $\phi_n$ converges weakly to $0$ in $H^2(D)$, $\|\phi_n\|_{H^2(D)}$ and $\|A_1(k) \phi_n\|_{H^2(D)}$ are bounded. By the compact embedding $H^2_0(D) \hookrightarrow L^2(D)$, the sequence $\phi_n$ converges strongly to $0$ in $L^2(D)$, as well as $A_1(k) \phi_n$. Finally, the convolution operator $\Phi_{s,k}^\Delta*$ is compact (see Lemma \ref{lemma_Phi_delta}, item (ii)), hence $\Phi_{s,k}*(\Delta +k^2)\phi_n$ also converges to $0$ in $L^2(D)$. This proves that $A_1(k)$ is compact. The variational equation (\ref{var_te}) is equivalent to 
$$
A_0 v +  A_1(k) v = 0 . 
$$
Since $a_0$ is coercive, the operator $A_0$ is invertible, and we have that $k$ is a transmission eigenvalue if and only if 
$$
(I +   A_0^{-1} A_1(k) )v  = 0
$$
has a non-trivial solution $v\in H^2_0(D)$. 
The operator $A_0^{-1}A_1(k)$ is compact (composition of a compact operator and bounded operator) and depends analytically on $k$. Furthermore, we can estimate the norm of $A_1(k)$ using (\ref{boundA1}) and item (ii) of Lemma \ref{lemma_Phi_delta} to obtain 
\begin{align*}
    \| A_1(k) \phi\|_{H_0^2(D)\to H_0^2(D)} &\leq C(n,s) (k^2 + k^4 + k^{2s} \| \Phi_{s,k}^\Delta \|_{L^1(D)})\\
    &\leq C(n,s,D) (k^s+k^4).
\end{align*}
We deduce that for $k$ small enough, $I - A_0^{-1} A_1(k)$ is invertible. Hence by analytic Fredholm theory (see for example Theorem 8.26 in \cite{colton1998inverse}), $I - A_0^{-1}A_1(k)$ is invertible for all $k\in \R^+$ except for a discrete set, possibly empty, that can accumulate only at infinity. 
\end{proof}

\section{Numerical Results}

\subsection{The Direct Problem}

To generate the far field operator numerically, we need to solve the Lippmann-Schwinger equation and, in particular, we need to implement the convolution with the fundamental solution. In the paper \cite{zilberberg2026limiting}, we proved that for a function $f\in L^{2,\delta}(\R^d)$, $1/2<\delta<1 $, the solution of (\ref{s-Helm}) is obtained by convolving with the fundamental solution $\Phi_{s,k}$ 
\begin{align} \label{convolution}
    u(x) = \int_{\R^d} \Phi_{s,k}(x-y) f(y) dy. 
\end{align}
Note that for functions $f$ that are not compactly supported, an error comes from truncating the above integral. We define the region of interest to be the $d$-dimensional cube $[-x_{max}, x_{max}]^d$ centered at $0$ and discretized into $N_x^d$ smaller cubes of size $h$ centered at points $x_i$, for $i = 1,...,N_x^d$. We then classically approximate the integrand by a piecewise constant function on each cube, with value the evaluation of the integrand at the center of that cube. This gives rise to the following numerical approximation of (\ref{convolution})
\begin{align*}
    u_{approx}(x_i) = \sum_{j=1, j\not = i }^{N_x^d} \Phi^{approx}_{s,k}(x_i-x_j) f(x_j) h^d + f(x_i)  I\Phi^{approx}\quad .
\end{align*}
The square centered at $x_i$ is treated separately by the term $I\Phi^{approx}$, as the fundamental solution is singular at zero.  \\
\indent In what follows, we detail a method to implement $\Phi_{s,k}^{approx}$ and $I\Phi^{approx}$ in dimension $d=2$. The two dimensional case contains more challenging terms than dimensions 1 and 3, but dimensions 1 and 3 would use similar techniques. Recall that in dimension 2, the fundamental solution is given by a multiple of the Hankel function $H^{(1)}_0(k|x|)$ and terms from formula (\ref{Phi_sk_delta2}). The Hankel function can be found in the Matlab library. The integral term in (\ref{Phi_sk_delta2}) is an infinite oscillatory integral that requires a truncation. The integrand at infinity is a $O\left(\frac{1}{r^{2s(m+1) + 1/2}}\right)$, hence if the integral is truncated at $C$, the error will be 
\begin{align*}
    O\left(\int_C^\infty \frac{dr}{r^{2s(m+1)+1/2}}\right) = O\left(\frac{1}{C^{2s(m+1)-1/2}}\right).
\end{align*}
We choose $C$ such that the error is a $O(h^2)$, that is 
$C = h^\frac{-2}{2s(m+1)-1/2}$. Moreover, to capture the oscillatory nature of the integral and the cancellations it ensures, we discretize it with $10$ points per "worst" period. The period of an oscillation is roughly $\frac{2\pi}{|x|}$, so the smallest period will be for the maximal computed value of $|x|$, which is $ 2\sqrt{2} x_{max}$. The number of periods in $[0,C]$ is
$$
N_{periods} = \frac{C2\sqrt{2}x_{max}}{2\pi }.
$$
Taking $10$ points per period, we obtain that the number of points used to discretize the integral will be 
\begin{align*}
    N := N_{periods} \times N_{points/period} = 10\frac{C2\sqrt{2}x_{max}}{2\pi }.
\end{align*}
We then use the trapeze method built-in matlab using the above parameters. When $s<1/2$, the additional terms are straightforward to compute, except for the Struve function $K_0(k|x|)$, for which we use the code developed in \url{https://www.mathworks.com/matlabcentral/fileexchange/37302-struve-functions}. 

Next, to compute $I\Phi^{approx}$, we use the fact that the fundamental solution is radial and approximate the integral over the square by an integral over the ball of same center and radius $h/2$. We then take a computable equivalent near zero of the fundamental solution.  
\begin{itemize}
    \item For the Helmholtz part $\Phi_{helm,k}$, the asymptotic of the Hankel function as $z \to 0$ is  
    \begin{align*}
        H^{(1)}_0(z) \sim \frac{2i}{\pi} \ln(z) .
    \end{align*}
    Hence we have  
    \begin{align}\label{approx1}
       \frac{i}{4}s^{-1}k^{2-2s} \int_{B(x_i, h/2)} H^{(1)}_0(k|x-x_i|) dx 
       &\approx   \frac{i}{4}s^{-1}k^{2-2s} 2\pi \int_0^{h/2} \frac{2i}{\pi} \ln(r k) r dr\nonumber\\
       &= -s^{-1}k^{2-2s}  \left(\frac{h^2}{8} \ln(kh/2)  - \frac{h^2}{16}\right)
    \end{align}
   
    \item For the integral term, we need to approximate
    \begin{align*}
I:=          \frac{1}{2\pi} \int_{B(x_i,h/2)} \int_0^\infty J_0(|x-x_i|r) F(r,k) r dr dx = \int_0^{h/2} \int_0^\infty ryJ_0(ry)F(r,k) drdy.
    \end{align*}
    We exchange the order of integration and use the relation 
    \begin{align*}
        \frac{d}{dx}\left(x J_1(x)\right) = xJ_0(x)
    \end{align*}
    to obtain 
    \begin{align*}
    I& =  \int_0^\infty \int_0^{h/2}\frac{d}{dy}(yJ_1(ry)) F(r,k) dydr= \int_0^\infty \frac{h}{2}J_1\left( r \frac{h}{2}\right) F(r,k) dr= \int_0^\infty J_1(r) F\left(\frac{2r}{h},k\right) dr.
    \end{align*}
    To further simplify this term, we can use the asymptotic of $F(\frac{2r}{h},k)$ as $h \to 0$ 
    \begin{align*}
        F\left(\frac{2r}{h},k\right)  \sim \frac{h^{2s(m+1)} k^{2sm}}{(2r)^{2s(m+1)}} 
    \end{align*}
    to get that 
    \begin{align}\label{mu}
        I \approx
   (h/2)^{2s(m+1)} k^{2sm}\int_0^\infty  r^{-2s(m+1)}J_1(r) dr.
    \end{align}
The formula 10.22.43 in \cite{DLMF} given below explicitly computes the integral of Bessel functions against power functions  
    \begin{align*}
        \int_0^\infty t^\mu J_\nu (t) dt = \frac{2^\mu \Gamma(\nu/2 +  \mu/2 + 1/2)}{\Gamma(\nu/2-\mu/2 + 1/2)} \quad \mbox{ for } Re(\mu+\nu) >-1, Re(\mu) <1/2.
    \end{align*}
    We apply the formula for $\nu = 1$ and $\mu = -2s(m+1)$ (they satisfy $Re(\mu+\nu) >-1$ and $Re(\mu) < \frac{1}{2}$). Therefore 
    \begin{align*}
        I_2 \approx  (h/2)^{2s(m+1)}k^{2sm}\frac{2^{-2s(m+1)}\Gamma(1- s(m+1))}{\Gamma(1+ s(m+1))}.
    \end{align*}
    \item When $s\leq \frac{1}{2}$, the fundamental solution $\Phi_{s,k}$ contains a sum of singular kernels $|x|^{-2+2s(j+1)}$. We approximate their contribution to $I\Phi^{approx}$ by 
    \begin{align}\label{kernel_approx}
    \int_{B(x_i,h/2)}\frac{c_{s,j}}{|x-x_i|^{2-2s(j+1)}} dx = \frac{\pi c_{s,j} }{s(j+1)} \left(\frac{h}{2}\right)^{2s(j+1)}.
    \end{align}
    \item Finally when $s\leq \frac{1}{2}$ and $\frac{1}{2s}\in \mathbb{N}$, we need to account for the contribution of the Struve function. Since when $z \to 0$, $K_0(z)$ is equivalent to $\frac{2}{\pi} \ln(\frac{z}{2})$, we have 
    \begin{align}\label{K0_approx}
        \int_{B(x_i,h/2)} -\frac{k^{2-2s}}{4} K_0(k|x|) dx &\sim -k^{2-2s} \int_0^{h/2} r\ln\left(\frac{r k}{2}\right) dr = - k^{2-2s}\left(\frac{h^2}{8} \ln\left(\frac{h k}{2}\right) - \frac{h^2}{16}\right).
    \end{align}
\end{itemize}
Combining (\ref{approx1}), (\ref{mu}), (\ref{kernel_approx}) and (\ref{K0_approx}), we obtain a construction of $I\Phi^{approx}$ in dimension $d=2$. \\

\indent To test the numerical method, we use the computations done in \cite{sheng2020fast}, proposition 4.2 and 4.3, where the authors obtained explicit expressions of the fractional laplacians of a Gaussian and an algebraically decaying family of functions :
\begin{align*}
    \FLs (e^{-|x|^2})  &= \frac{2^{2s}\Gamma(s+d/2)}{\Gamma(d/2)} \hgeom{1}{1}(s+\frac{d}{2}, \frac{d}{2}, -|x|^2)\\
    \FLs \left(\frac{1}{(1+|x|^2)^\alpha} \right) &= \frac{2^{2s} \Gamma(s+\alpha)\Gamma(s+d/2)}{\Gamma(\alpha)\Gamma(d/2)} \hgeom{2}{1}(s+\alpha, s+\frac{d}{2}, \frac{d}{2}, -|x|^2)
\end{align*}
where $\alpha > 0$. 
Hence we will consider the right hand sides 
\begin{align*}
    f_{exp}(x) &:=  \FLs (e^{-|x|^2}) - k^{2s}e^{-|x|^2}  \\
    &= \frac{2^{2s}\Gamma(s+d/2)}{\Gamma(d/2)} \hgeom{1}{1}\left(s+\frac{d}{2}, \frac{d}{2}, -|x|^2\right) -k^{2s}e^{-|x|^2} \\
    f_{\alpha}(x) &:=  \FLs \left(\frac{1}{(1+|x|^2)^\alpha} \right) -\frac{k^{2s}}{(1+|x|^2)^\alpha}\\&= \frac{2^{2s} \Gamma(s+\alpha)\Gamma(s+d/2)}{\Gamma(\alpha)\Gamma(d/2)} \hgeom{2}{1}\left(s+\alpha, s+\frac{d}{2}, \frac{d}{2}, -|x|^2\right) - \frac{k^{2s}}{(1+|x|^2)^\alpha}
\end{align*}
with $\alpha > \frac{d}{2}$, which give rise to the radiating solutions $u_{exp} := e^{-|x|^2}$ and $u_{\alpha}:= \frac{1}{(1+|x|^2)^\alpha}$ respectively, such that 
\begin{align*}
    (\FLs - k^{2s}) u_{\exp} &= f_{\exp}\\
    (\FLs - k^{2s}) u_{\alpha} &= f_\alpha\quad.
\end{align*}

We must verify that $f_{\exp}$ and $f_{\alpha}$ are in $L^{2,\delta}(\R^d)$ for some $1/2<\delta <1$. The asymptotics as $|x|\to \infty$ of the hypergeometric functions can be found in \cite{sheng2020fast} and are given by
\begin{align}
     \hgeom{1}{1}\left(s+\frac{d}{2}, \frac{d}{2}, -|x|^2\right) &= \frac{C(d,s)}{|x|^{d+2s}} (1+ O(|x|^{-2}))\nonumber\\
     \hgeom{2}{1}\left(s+\alpha, s+\frac{d}{2}, \frac{d}{2}, -|x|^2\right) &= \frac{C(d,s)}{|x|^{\min(2\alpha,d)+2s}} (1+ o(1)) \quad. \label{hypergeom2}
\end{align}
This decay induces $L^{2,\delta}$ functions at infinity for $\delta$ such that $2(d+2s-\delta) > d$. Since neither of the functions have singularities, they will be in $L^{2, \delta}(\R^d)$ for $\delta \in (\frac{1}{2}, 1)$ in dimension $d=2$. \\

\begin{figure}[h!]
\centering
    \includegraphics[scale=0.5]{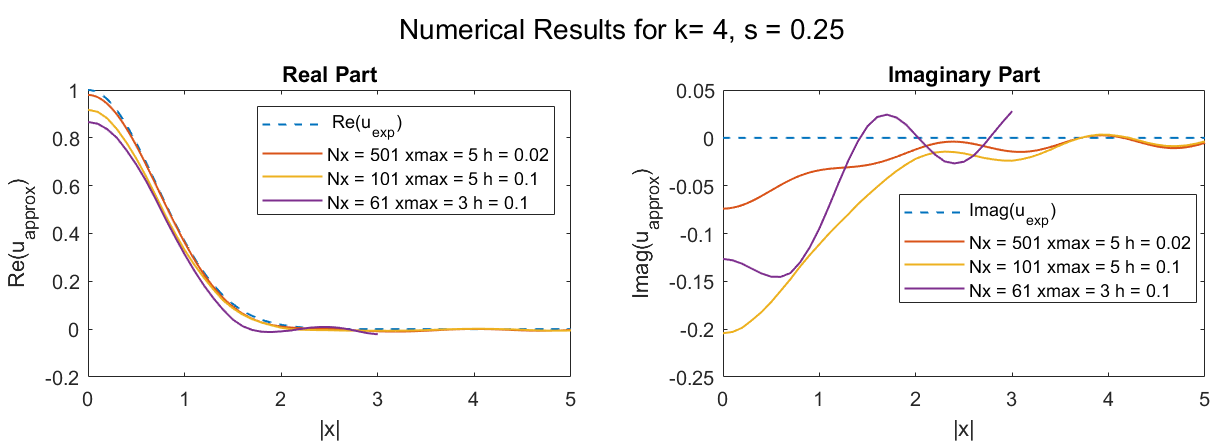}\\
    \centering
    \includegraphics[scale=0.5]{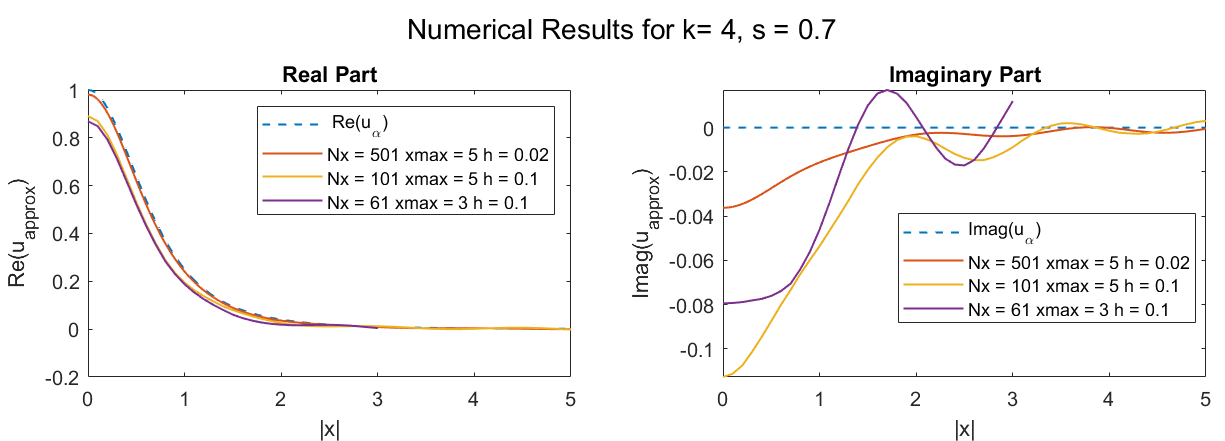}
    \caption{Qualitative Result for the test right hand sides $f_{exp}$ and $f_\alpha$ for $\alpha=2$ in 2D}
    \label{fig:qualitative_res}
\end{figure}

\indent The qualitative results for the test right hand sides $f_{exp}$ and $f_{\alpha}$ are shown in figure \ref{fig:qualitative_res}. The dashed lines correspond to the exact solutions, that is $u_{\exp}$ in the top figures and $u_{\alpha}$ in the bottom ones. The solid lines correspond to the numerical convolution of $f_{\exp}$ and $f_{\alpha}$ with the fundamental solution computed as described above, for different sets of parameters varying the mesh size ($h$) and the size of the domain ($x_{max}$). We see that the finer the discretization is (i.e. the smaller $h$ is) and the larger $x_{max}$ is, the more accurate the results are, as expected. Note that taking $h \to 0$ is not enough to have convergence, as we also need $x_{max} \to \infty$ to account for the truncation of the integral. For a more quantitative perspective, figure (\ref{error}) displays the error $\| u_{approx} - u_{\exp}\|$, both in $L^2$ and $L^\infty$ norms, as the mesh size is refined for $k=4$, $s=0.7$ and $x_{max}= 5$.
\begin{figure}[h!]
    \centering
\includegraphics[width=0.8\linewidth]{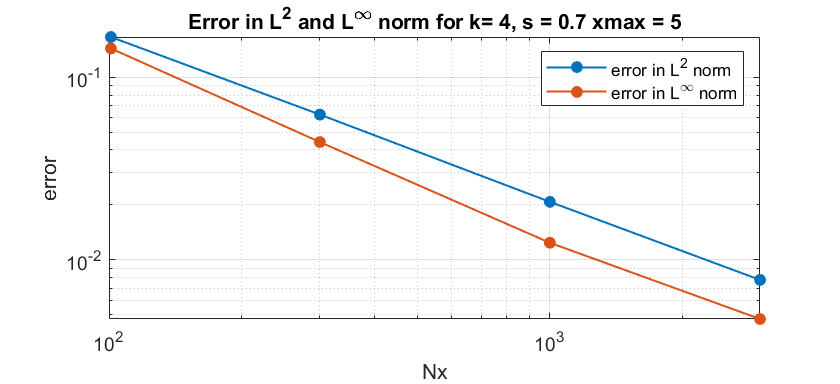}
    \caption{Error plot for mesh refinement}
    \label{error}
\end{figure}
\newpage
\begin{remark} It is interesting to test this numerical convolution with right hand sides that do not belong to $L^{2,\delta}(\R^d)$ for any $\delta \in (\frac{1}{2},1)$. Theoretically, we no longer have the uniqueness of the solution to the fractional Helmholtz equation and the convolution with the fundamental solution $\Phi_{s,k}$ is a priori not well defined outside of $L^{2,\delta}$, $\delta \in (\frac{1}{2}, 1)$. However, we can compute the convolution numerically. We can take for example the previous right hand side $f_{\alpha}$ with $\alpha< \frac{3}{4}$, so that $f_{\alpha} \not \in L^{2,\delta}(\R^2)$ for any $\delta \in (\frac{1}{2},1)$. The above calculations remain valid, that is $(\FLs - k^{2s}) u_{\alpha} = f_{\alpha}$. Figure (\ref{small_alpha}) shows the numerical result. It seems that the convolution does approximate the expected solution, with some oscillation phenomena. Note that since the function decays slower, we consider $x_{max}$ larger than before for more precision. 

\begin{figure}[h!]
    \centering
\includegraphics[width=\linewidth]{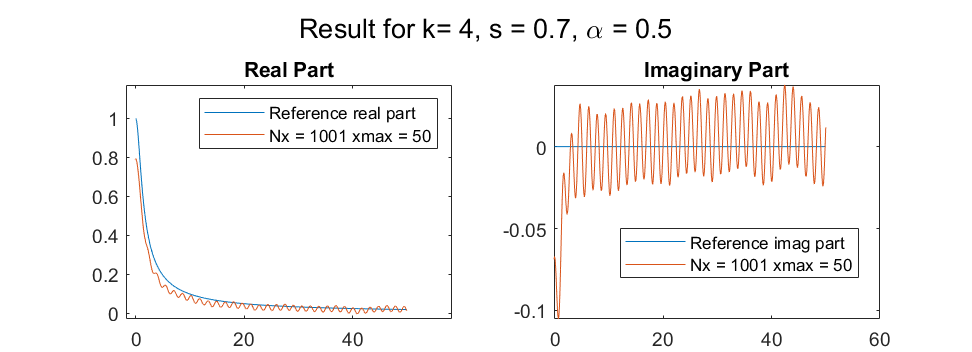}
    \caption{Numerical result for $\alpha = 0.5$}
    \label{small_alpha}
\end{figure}
\end{remark}

 \subsection{Inverse Problem Reconstruction (Factorization Method)}

To compute a numerical approximation of the far field operator, we discretize the sphere into $N_{inc}$ angles $\{\theta_j\}$ equally spaced. We then generate a square matrix $F$ of size $N_{inc} \times N_{inc}$ which approximate the far field operator, and whose entries $F_{ij}$ correspond to the far field pattern $u^\infty( \theta_i; \theta_j)$ measured at angle $\theta_i$ for an incident plane wave sent with angle $\theta_j$. Recall that the far field pattern is given by formula (\ref{far-field}), and combined with (\ref{LSh}) we have
$$
u^\infty(\hat x; \theta) =\frac{k^2}{s}\int_D e^{- i k \hat x\cdot y}(n(y)-1)((I-T_{s,k})^{-1}  e^{i k \langle\theta, \cdot \rangle }) (y) dy.
$$
We numerically approximate the above formula by
\begin{align*}
    F_{ij} =[ Q(I-M)^{-1}e^{i k \langle\theta_j, \cdot \rangle }]_i
    \end{align*}
where $M$ is of size $N_{supp} \times N_{supp}$, with $N_{supp}$ being the size of the support of the inhomogeneity, and $Q$ is of size $N_{inc} \times N_{supp}$ such that
\begin{align*}
    M_{ij} &:=\begin{cases}
k^{2s}\Phi^{approx}_{s,k}(x_i-x_j) (n-1)(x_j) h^d\quad & \mbox{ if } i\not = j \\
        k^{2s} (n-1)(x_i) I\Phi^{approx}\quad & \mbox{ if } i = j  
    \end{cases}   ,\quad  x_i, x_j \in \mbox{supp} (n-1),\\
    Q_{ij} &:=
   \frac{k^{2}}{s} e^{- i k \theta_i \cdot x_j} (n-1)(x_j) h^d,\quad  x_j \in \mbox{supp} (n-1). 
\end{align*}
 
 Theorem \ref{FM_main} states that $z\in D$ if and only if there exists $g\in L^2(\mathbb{S}^{d-1})$ such that $(F^*F)^{1/4} g = \Phi^\infty(\cdot, z)$. We numerically solve this equation, and the numerical solution is expected to have a large $L^2$ norm when $z\not \in D$. Multiplying the equation by $(F^*F)^{1/4}$, it is equivalent to solving  
\begin{align}\label{FM_eq}
   (F^*F)^{1/2} g = (F^*F)^{1/4} \Phi^\infty(\cdot, z).
\end{align}
Numerically, the notation $\Phi^\infty(\cdot, z)$ corresponds to a vector of size $N_{inc}$ with entries $\Phi^\infty(\theta_j, z), j \in \{1,..., N_{inc}\}$. Let $F = U S V^T$ be the singular value decomposition of the matrix $F$, then
\begin{align*}
    F^*F =\overline{V}|S|^2 V^T
\end{align*}
where $|S|$ denotes the diagonal matrix of the moduli $|\lambda_j|$ of the eigenvalues $\{\lambda_j\}_{1\leq j \leq N_{inc}}$ of $F$, and equation (\ref{FM_eq}) becomes 
$$
V^Tg = |S|^{-1/2} V^T\Phi^\infty(\cdot, z).
$$
Computing the $L^2$-norm of the vector $V^Tg$ gives an indicator type quantity that becomes large for $z$ outside of $D$, or equivalently the inverse of the $L^2$-norm of $V^T g$ given by
$$
W :=\left(\sum_{j=1}^{N_{inc}} \frac{[V^T\Phi^\infty(\cdot, z)]^2_j}{|\lambda_j|} \right)^{-1} 
$$
becomes small when $z\not \in D$. 
In what follows, we plot the quantity $W$ for different choices of parameters. We take the number of discretization points to be $N_x = 400$, and the size of the square to be $x_{max} = 5$.

In figure (\ref{fig:cercle+rectangle}), we compare the reconstructions for different values of $s$ at the frequency $k=5$ and the number of incident angles $N_{inc}= 72$. The top left picture is the true scatterer. The reconstructions capture the presence of the two distinct scatterers independently of $s=0.2$ or $s=0.7$. However, the reconstruction seems less precise as $s$ is larger, and even less so for the bottom right picture which corresponds to $s=1$, the Helmholtz case. This might be a manifestation of the better stability often induced by non-local operators. 
\begin{figure}[p]
    \centering
\includegraphics[scale=0.6]{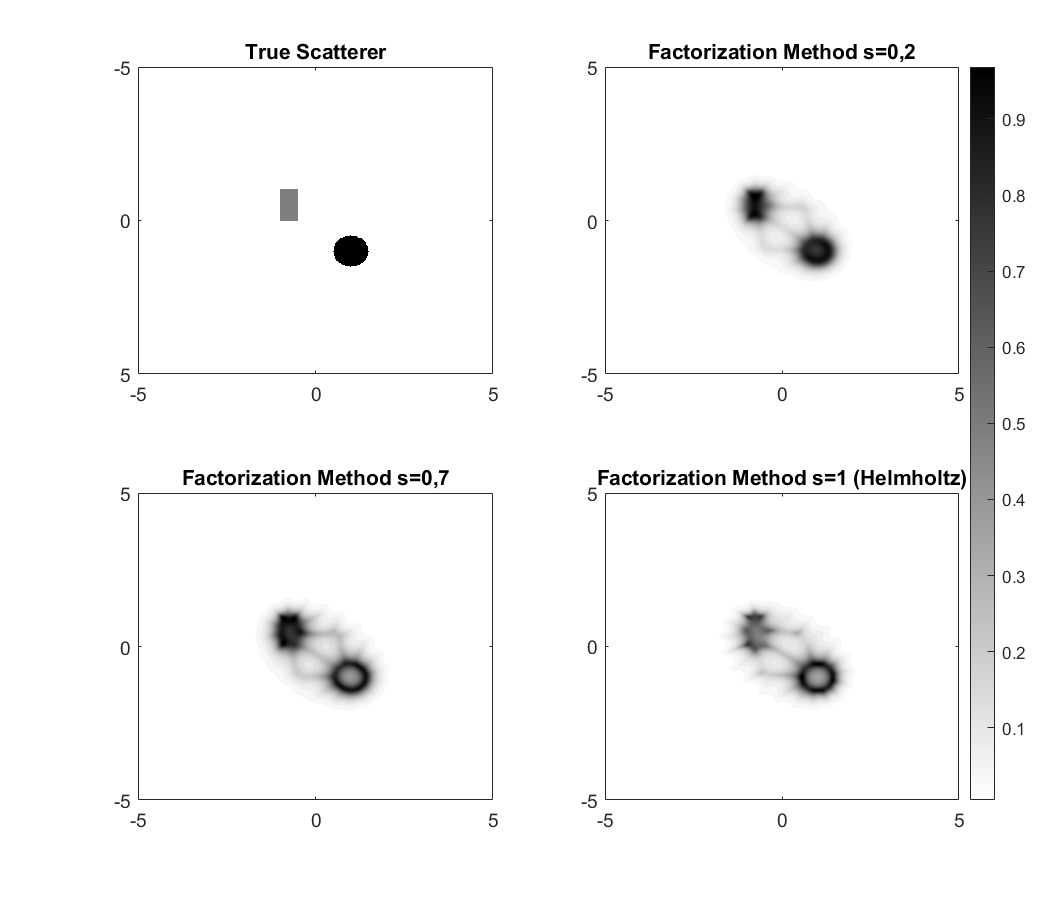}
    \caption{Reconstruction of the support of an inhomogeneity for different $s$}
\label{fig:cercle+rectangle}
\end{figure}

We then test the Factorization Method algorithm on a non-convex boomerang-like shape (first plot of figure (\ref{boomerang}) ) with two different frequencies and two different numbers of incident waves for an order $s=0.6$. At a higher frequency (bottom left), the reconstruction is sharper and better localized than at a lower frequency (top right). However the concavity is not as well recovered for the higher frequency $k=5$, which might be due to less stability in the numerical computations for higher frequencies. A larger number of incident fields does not seem to affect the results in this case, as we can see between the bottom left ($N_{inc} = 36$) and bottom right ($N_{inc} = 72$) figures.

\begin{figure}[t]
    \centering
    \includegraphics[scale=0.6]{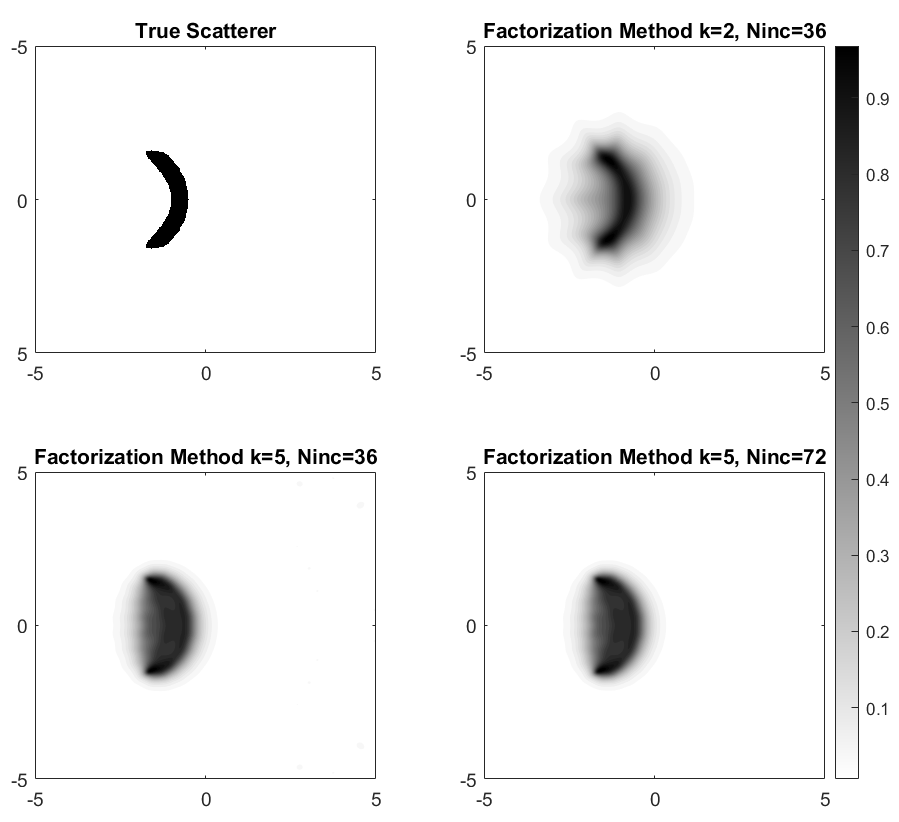}
    \caption{Reconstruction at different frequencies and different numbers of incident waves ($N_{inc}$)}
    \label{boomerang}
\end{figure}
\newpage


\section*{Acknowledgements}
This work was partially supported by NSF grant DMS-24-06313. The author would like to thank Professors F. Cakoni and M. S. Vogelius for valuable discussions on this project. 
\section*{Data Accessibility}
The code used in this paper is openly available with explanations on Zenodo:  \url{https://doi.org/10.5281/zenodo.19635828} \cite{dana} and GitHub: \url{https://github.com/dana-zbbg/FactorizationMethod_FractionalHelmholtz/releases/tag/v1.2}.

\section{Appendix}
 \subsection{Asymptotic Behavior of the Scattered Field}
 Let us denote the radiating fundamental solution of the fractional Helmholtz equation by  $\Phi_{s,k}$ and the Helmholtz fundamental solution by $\Phi_{helm,k}$. In \cite{zilberberg2026limiting}, it is proved that $\Phi_{s,k}$ in dimension $d=1,2,3$ for $s\in (0,1)$ is given by a multiple of the fundamental solution for Helmholtz equation and a term which decays faster at infinity. Let us denote this remainder term by
$$
\Phi_{s,k}^{\Delta} := \Phi_{s,k} - \frac{k^{2-2s}}{s} \Phi_{helm,k}.
$$
Recall that the fundamental solution for Helmholtz equation is given by
\begin{align*}
    \Phi_{helm,k}(x) = \begin{cases}
        \frac{e^{ik|x|}}{k} \quad & d=1,\\
        \frac{i}{4} H^{(1)}_0(k|x|)  \quad &d=2,\\
        \frac{e^{ik|x|}}{4\pi |x|} \quad & d=3\quad 
    \end{cases}\quad
\end{align*}
and has the asymptotic expansion 
\begin{align}\label{asymotic_fund_sol_Helm}
    \Phi_{helm,k}(x)
    &= c_d k^{\frac{d-3}{2}}\frac{e^{ik|x|}}{(\pi|x|)^{\frac{d-1}{2}}} + O_{|x|\to \infty}(|x|^{\frac{-d-1}{2}})\quad 
\end{align}
where $c_1 = 1, c_2 = \frac{e^{-i\pi/4}}{2\sqrt{2}}, c_3 = \frac{1}{4}$. According to the computations done in \cite{zilberberg2026limiting}, we have 
\begin{numcases}{\Phi_{s,k}^\Delta (x) = }
    \frac{k^{1-2s}}{\pi } \int_0^\infty \frac{e^{-yk|x| }y^{2s} \sin(s\pi)}{y^{4s}+ 1 - 2\cos(s\pi) y^{2s}} & \quad  d=1, \label{Phi_sk_delta1}\\
      \sum_{j=0}^{m-1} \frac{c_{2,j} k^{2sj}}{|x|^{2-2s(j+1)}} + \frac{1}{2\pi} \int_0^\infty J_0(r|x|) F(r,k) rdr + L(|x|, k) & \quad d=2,\label{Phi_sk_delta2}\\
        \sum_{j=0}^{m-1} \frac{c_{3,j}k^{2sj}}{|x|^{3-2s(j+1)}} + \frac{k^{2-2s}}{2\pi^2 |x|} \int_0^\infty e^{-k|x|y} y^{1-2sm} \frac{y^{2s}\sin(\pi s(m+1)) -|x|^{2s}\sin(\pi sm)}{y^{4s} + 1 - 2 \cos(s\pi) y^{2s}} dy& \quad d=3,\label{Phi_sk_delta3}
    \end{numcases}
where $m:= \left\lfloor \frac{1}{2s}\right\rfloor$,
\begin{align*}
&F(r,k) :=\begin{cases}
 \frac{k(r^2-k^{2}) - s^{-1}k^{2-2s}r(r^{2s}-k^{2s}) + k^{2-2s}(r^{2s}-k^{2s})(r-k)}{r(r^{2s}-k^{2s})(r^2-k^2)} &\mbox{ if } s\in (0,\frac{1}{2}], \frac{1}{2s} \in \mathbb{N},\\
   \frac{k^{2sm}(r^2-k^{2}) - s^{-1}k^{2-2s}r^{2sm}(r^{2s}-k^{2s})}{r^{2sm}(r^{2s}-k^{2s})(r^2-k^2)}&\mbox{ otherwise },
   \end{cases}\\
  &L(r,k) :=\begin{cases}
   - \frac{k^{2-2s}}{4} K_0(k|x|)&\mbox{ if } s\in (0,\frac{1}{2}], \frac{1}{2s} \in \mathbb{N},\\
 0 &\mbox{ otherwise }, 
   \end{cases} \\
      & c_{d,j} := \frac{\Gamma(d/2 - s(j+1))}{4^{s(j+1)}\pi^{d/2} \Gamma(s(j+1))}, \quad 
\end{align*}
and $K_0$ is the Struve function of the second kind of order $0$. We take the convention $\sum_{j=0}^{-1} = 0$. 

The following lemma follows from the analysis done in \cite{zilberberg2026limiting}. 
\begin{lemma}\label{lemma_Phi_delta}
    \begin{itemize} 
        \item[(i)] The asymptotic behavior of $\Phi_{s,k}$ at infinity is the same as $\frac{k^{2-2s}}{s}\Phi_{helm,k}$ and
        \begin{align*}
           \left| \Phi_{s,k}(x) - \frac{k^{2-2s}}{s} \Phi_{helm,k}(x) \right| = \left| \Phi_{s,k}^\Delta(x)\right| = O(|x|^{-\frac{d}{2}}).
        \end{align*}
        \item[(ii)]   $\Phi^\Delta_{s,k}$ is real valued, its convolution with $L^2(D)$ functions defines a compact operator from $L^2(D) \to L^2(D)$ and it satisfies the estimate 
        \begin{align*}
            \| \Phi_{s,k}*f\|_{L^2(D)} \leq C(D,s) (1+k^{2sm} + k^{-s} + k^{1-2s})\|f\|_{L^2(D)}, \quad \mbox{ for all } f \in L^2(D).
        \end{align*}
    \end{itemize}
\end{lemma}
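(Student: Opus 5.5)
The statement is Lemma \ref{lemma_Phi_delta}, which collects properties of the remainder kernel $\Phi^\Delta_{s,k}$. The plan is to read each property off the explicit formulas (\ref{Phi_sk_delta1})--(\ref{Phi_sk_delta3}) dimension by dimension, using the decomposition $\Phi_{s,k} = \frac{k^{2-2s}}{s}\Phi_{helm,k} + \Phi^\Delta_{s,k}$ from \cite{zilberberg2026limiting}.

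\textbf{Item (i): decay at infinity.} For $d=1$, substitute $t = yk|x|$ in (\ref{Phi_sk_delta1}). Near $y=0$ the integrand behaves like $y^{2s}e^{-yk|x|}$, since the denominator $y^{4s}+1-2\cos(s\pi)y^{2s}$ stays bounded away from zero for $0<s<1$. This gives $\Phi^\Delta_{s,k}(x) = O(|x|^{-1-2s})$, which is $O(|x|^{-1/2})$.

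For $d=3$, the singular kernels $|x|^{-3+2s(j+1)}$ with $j\le m-1$ have exponent $3-2s(j+1) \ge 3-2sm \ge 2 > 3/2$, since $2sm\le 1$. The Laplace-type integral in (\ref{Phi_sk_delta3}) is handled by Watson's lemma, using the small-$y$ behaviour of its integrand. Here one should check that the term $|x|^{2s}y^{1-2sm}$ produces, after the factor $1/|x|$, decay at least of order $|x|^{2s-1-(2-2sm)}$, and that this is $O(|x|^{-3/2})$.

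For $d=2$, the power terms decay like $|x|^{-2+2s(j+1)}$, with exponent at least $1$. The term $K_0$ decays faster than any of the relevant rates. The hard part is the Hankel-type integral $\int_0^\infty J_0(r|x|)F(r,k)\,r\,dr$. The key step is to check that $F(\cdot,k)$ has removable singularities at $r=k$; the formula for $F$ is built exactly so that the residues cancel. Then the decay of this integral is governed by the behaviour of $F$ near $r=0$ (a power $r^{-2sm}$ type singularity) and at $r\to\infty$ (decay like $r^{-2s(m+1)}$). I would integrate by parts using $\frac{d}{dr}(rJ_1(r|x|)) = r|x|J_0(r|x|)$ and use the standard asymptotics of Hankel transforms of functions with power singularities. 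This should show that the integral is $O(|x|^{-1})$, which is stronger than the required $O(|x|^{-1})$ bound $|x|^{-d/2}$ for $d=2$.

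This Hankel-integral estimate is the main obstacle, because the integral converges only conditionally near infinity for small $s$. I would first try splitting $F$ into its explicit leading power at $\infty$, whose Hankel transform is known in closed form by the formula 10.22.43 in \cite{DLMF} quoted earlier, plus a remainder in $L^1(r\,dr)$ with integrable derivative.

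\textbf{Item (ii): real-valuedness.} Every expression in (\ref{Phi_sk_delta1})--(\ref{Phi_sk_delta3}) involves only real integrands, real constants $c_{d,j}$, and real functions $J_0$ and $K_0$. So $\Phi^\Delta_{s,k}$ is real valued. It is also radial, hence even, which makes the convolution operator symmetric and therefore self-adjoint.

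\textbf{Item (ii): compactness and the estimate.} I would show $\Phi^\Delta_{s,k}\in L^1(B_{\mathrm{diam} D})$. The only singularities are at the origin: weak singularities $|x|^{-d+2s(j+1)}$, a logarithm, or, for $d=1$, a bounded kernel. All of these are locally integrable. Young's inequality then gives boundedness $L^2(D)\to L^2(D)$ with norm at most $\|\Phi^\Delta_{s,k}\|_{L^1(B_{\mathrm{diam} D})}$. Compactness follows from approximating the kernel in $L^1$ by truncations $\Phi^\Delta_{s,k}\mathbf 1_{|x|>\epsilon}$, which are bounded kernels and hence Hilbert--Schmidt on the bounded set $D$.

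For the constant, scale $x\mapsto x/k$ in each formula: the power terms contribute $k^{2sj}$ with $j\le m$, the $d=1$ integral contributes $k^{1-2s}$ times a $k$-independent factor, and so on. Adding the Helmholtz part, whose $L^1(B_R)$ norm is bounded in the stated way, gives the claimed bound in powers $k^{2sm}, k^{-s}, k^{1-2s}$.
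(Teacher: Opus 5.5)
\textbf{The last step of (ii) is false.} The estimate is meant for $\Phi^\Delta_{s,k}$. The paper's proof bounds only $\|\Phi^\Delta_{s,k}\|_{L^1}$ and then applies Young's inequality, and that is the form used in Theorem~\ref{discrete_transmissionEigenvalues}. You read the display literally, add $\frac{k^{2-2s}}{s}\Phi_{helm,k}$, and assert that its $L^1(B_R)$ norm is bounded by $C(1+k^{2sm}+k^{-s}+k^{1-2s})$. In $d=3$,
\[
\Bigl\|\tfrac{k^{2-2s}}{s}\Phi_{helm,k}\Bigr\|_{L^1(B_R)}=\frac{k^{2-2s}R^2}{2s},
\]
and $2-2s>\max\{0,\,2sm,\,1-2s\}$ for every $s\neq\frac12$. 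For $s>\frac12$ one has $m=0$, and the bracket even stays bounded as $k\to\infty$. So Young's inequality applied to the oscillatory Helmholtz kernel cannot give the stated powers. A bound for the full $\Phi_{s,k}*$ would need a genuinely oscillatory estimate, such as the cut-off resolvent bound $\|\chi(-\Delta-k^2-i0)^{-1}\chi\|_{L^2\to L^2}\lesssim k^{-1}$, not an $L^1$ bound on the kernel. Drop that step and prove the estimate for $\Phi^\Delta_{s,k}$ only.

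Even there, your ``scale $x\mapsto x/k$ \dots\ and so on'' is only a sketch. The paper obtains the specific powers from explicit lower bounds on $y^{4s}+1-2\cos(s\pi)y^{2s}$: by $y^{4s}+1$ or $\sin^2(s\pi)\,y^{4s}$ in $d=1$, and by $\sin^2(s\pi)\,y^{4s}$ and $2(1-\cos s\pi)\,y^{2s}$ in $d=3$. It also uses an integration by parts at $s=\frac12$, $d=1$ (the source of $k^{-s}$), and the scaling $\|\nabla F(\cdot,k)\|_{L^1(\R^2)}=k^{1-2s}\|\nabla F(\cdot,1)\|_{L^1(\R^2)}$ in $d=2$.

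Your compactness argument is a valid, more self-contained alternative to the paper's appeal to $H^{2s}(D)\hookrightarrow L^2(D)$: you approximate the kernel in $L^1$ by bounded kernels, which give Hilbert--Schmidt operators on $D$. Real-valuedness is argued as in the paper, and $L^1(B_{\mathrm{diam}D})$ is indeed the right norm for Young's inequality.

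\textbf{Item (i) in $d=3$.} The paper does not derive the decay from (\ref{Phi_sk_delta1})--(\ref{Phi_sk_delta3}); it cites Theorem~5.1 of Appendix~5.A of \cite{zilberberg2026limiting}.

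Your direct route works in $d=1$: the denominator is $\ge\sin^2(s\pi)$, so $\Phi^\Delta_{s,k}=O(|x|^{-1-2s})$. It also works in $d=2$. There, integrating by parts with $\frac{d}{dr}(rJ_1(r|x|))=r|x|J_0(r|x|)$ and using $|J_1|\le 1$ already gives a bound $C\|\nabla F(\cdot,k)\|_{L^1(\R^2)}/|x|$. This is what the paper takes from Lemma~5.3 of \cite{zilberberg2026limiting}, and your splitting and Hankel asymptotics are unnecessary.

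The $d=3$ check you postpone, however, fails for (\ref{Phi_sk_delta3}) as printed. The term you identify decays like $|x|^{2s-1-(2-2sm)}=|x|^{-3+2s(m+1)}$, which is $O(|x|^{-3/2})$ only if $2s(m+1)\le\frac32$.
\begin{itemize}
\item For $s\in(\frac38,\frac12]$ one has $m=1$, so this term decays only like $|x|^{-3+4s}$ (e.g.\ $|x|^{-1.4}$ at $s=0.4$).
\item Since $\sin(\pi sm)>0$, Watson's lemma shows this is its exact leading order.
\item Nothing cancels it, because all other terms decay at least like $|x|^{-3+2sm}$.
\end{itemize}
The factor $|x|^{2s}$ is incompatible with the scaling $\Phi^\Delta_{s,k}(x)=k^{d-2s}\Phi^\Delta_{s,1}(kx)$ forced by the multiplier $\frac{1}{|\xi|^{2s}-k^{2s}}-\frac{k^{2-2s}}{s(|\xi|^2-k^2)}$, so it is a misprint. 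Consequently (i) in $d=3$ cannot be completed from the displayed formula, and you have to rely on \cite{zilberberg2026limiting}, as the paper does.

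A minor point: $K_0$ in (\ref{Phi_sk_delta2}) is the Struve function $\mathbf{K}_0$, which decays like $2/(\pi k|x|)$, not faster. That is still $O(|x|^{-d/2})$ for $d=2$, so your conclusion there is unaffected.
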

\begin{proof}
    \begin{itemize}
        \item[(i)] The fundamental solution for Helmholtz equation behaves at infinity like $O(|x|^{\frac{-d+1}{2}})$. Theorem 5.1 in appendix 5.A of \cite{zilberberg2026limiting} shows that $\Phi_{s,k}^\Delta(x)$ is at least a $O(|x|^{-\frac{d}{2}}) $, hence the result. 
        \item[(ii)] We can see from the expression of $\Phi_{s,k}^\Delta$ given above by (\ref{Phi_sk_delta1}), (\ref{Phi_sk_delta2}) and (\ref{Phi_sk_delta3}) that it is real valued. Compactness follows from the embedding $H^{2s}(D) \hookrightarrow L^2(D)$. To prove the norm estimate, we analyze each dimension and show that $\Phi_{s,k}\in L^1(D)$. In dimension 1, when $s>1/2$, we can bound below the denominator in the integral (\ref{Phi_sk_delta1}) by $y^{4s}+1$, and bound the exponential by $1$. We obtain $|\Phi_{s,k}^{\Delta}(x)|\leq C(s) k^{1-2s}$. When $s<1/2$, we can bound the 
        denominator below by $(1-\cos^2(s\pi) )y^{4s}$ and after the change of variable $\tilde y = k |x|y$ we obtain $|\Phi_{s,k}^{\Delta}(x)|\leq C(s) |x|^{2s-1}$. When $s=1/2$, we can directly perform an integration by parts 
        \begin{align*}
            \Phi_{s,k}^{\Delta}(x) &= \frac{1}{\pi}\int_0^\infty \frac{e^{-y |x|k}y}{y^2 + 1} dy =  \frac{1}{2\pi} \int_0^\infty k|x|e^{-y |x|k } \ln(1+ y^2) dy\\
            &=  \frac{1}{2\pi} \int_0^\infty e^{-y} \ln\left( 1+ \frac{y^2}{k^2 |x|^2}\right) dy\\
            &\leq \frac{2}{\pi k^{1/2}|x|^{1/2}} \int_0^\infty e^{-y} y^{1/2}  dy
        \end{align*}
        where we used $\ln( 1+ z) \leq 4z^{1/4}$. 
        We deduce that there exists some constant $C(D,s)$ depending only on $s$ and $D$ such that 
        $$
\|\Phi_{s,k}^\Delta\|_{L^1(D)}\leq C(D,s) (1+k^{-s}+k^{1-2s}) , \quad d=1. 
        $$
In dimension 2, following the arguments of lemma 5.3 in appendix 5A of \cite{zilberberg2026limiting}, we have 
        \begin{align*}
            | \Phi_{s,k}^\Delta(x)| \leq \sum_{j=0}^{m-1} \frac{c_{2,j}k^{2sj}}{|x|^{2-2s(j+1)}} + \frac{\| \nabla F(\cdot , k) \|_{L^1(\R^2)}}{|x|} + |L(|x|,k)|
        \end{align*}
        where again $m= \lfloor\frac{1}{2s}\rfloor$, and we take the convention $\sum_{j=0}^{-1} = 0$. We observe, still from the computations of  lemma 5.3 of Appendix 5A of \cite{zilberberg2026limiting}, that 
        $$
       \frac{\partial}{\partial r} F(r,k) = k^{-1-2s} \frac{\partial}{\partial r}F\left(\frac{r}{k},1\right)\quad,
        $$
        hence after a change of variable $\| \nabla F(\cdot, k)\|_{L^1(\R^2)} = k^{1-2s} \| \nabla F(\cdot, 1)\|_{L^1(\R^2)}$. We then use that for some constant $C$ depending on $D$, $\frac{k^{2-2s}}{4}|K_0(k|x|)| \leq \frac{C k^{1-2s}}{ |x|}$ for all $x\in D$, and that $k^{2s(m-1)} \leq 1 + k^{1-2s}$ for $m\geq 1$. We obtain 
        $$
       \| \Phi_{s,k}^\Delta \|_{L^1(D)} \leq C(D,s) (1 + k^{1-2s}), \quad d=2
        $$
        for some constant $C(D,s)$ depending only on $D$ and $s$. In dimension 3, we use the following bound on the integral part of $\Phi_{s,k}^\Delta$
        \begin{align*}
           & \frac{k^{2-2s}}{2\pi^2 |x|} \left|\int_0^\infty e^{-k|x|y} \frac{y^{1-2sm} y^{2s} \sin(\pi s(m+1))}{y^{4s}+1 -2\cos(s\pi) y^{2s}} dy -  \int_0^\infty e^{-k|x|y} \frac{y^{1-2sm} |x|^{2s} \sin(\pi sm)}{y^{4s}+1 -2\cos(s\pi) y^{2s}} dy\right|\\
            &\leq  \frac{k^{2-2s}}{2\pi^2 |x|} \left(\int_0^\infty \frac{e^{-k|x|y} y^{1-2s(m+1)} \sin(\pi s (m+1))}{(1-\cos^2(s\pi))} dy + |D|^{2s}\int_0^\infty \frac{e^{-k|x|y} y^{1-2s(m+1)} \sin(\pi sm)}{2(1-\cos(s\pi))} dy  \right)  \\
            &\leq C(D,s) \frac{k^{2sm}}{|x|^{3-2s(m+1)}} 
        \end{align*}
        where for the first inequality, we bounded below the denominator by $(1-\cos^2(s\pi) )y^{4s}$ in the first integral and by $2(1-\cos(s\pi) ) y^{2s}$ in the second integral. In the last inequality, we performed the change of variable $\tilde y = k|x|y$ to obtain the bound. We deduce that for some constant depending only on $s$ and $D$  
        $$
\|\Phi_{s,k}^\Delta\|_{L^1(D)}\leq C(s,D) (1+k^{2sm}) ,\quad d=3.
        $$
        We conclude that in dimension 1, 2, 3, we have the estimate
        $$
       \|\Phi_{s,k}^\Delta \|_{L^1(D)} \leq C(D,s) (1+k^{2sm} + k^{-s} + k^{1-2s}) .
        $$
        Young's inequality for convolution yields the result. 
    \end{itemize}
\end{proof}

\bibliographystyle{alpha}
\bibliography{Corrections/sample}

\end{document}